\numberwithin{equation}{section}
\theoremstyle{plain}
\newtheorem{theorem}{Theorem}[section]
\newtheorem{corollary}[theorem]{Corollary}
\newtheorem{proposition}[theorem]{Proposition}
\newtheorem{lemma}[theorem]{Lemma}
\theoremstyle{remark}
\newtheorem{remark}[theorem]{Remark}
\newtheorem*{ack}{Acknowledgements}
\theoremstyle{definition}
\newtheorem{definition}[theorem]{Definition}
\newtheorem{question}[theorem]{Question}
\newcommand{\sF}{\mathscr{F}}
\newcommand{\sE}{\mathscr{E}}
\newcommand{\R}{\mathbb{R}}
\newcommand{\Z}{\mathbb{Z}}
\newcommand{\N}{\mathbb{N}}
\DeclareMathOperator{\diam}{diam}
\DeclareMathOperator{\spt}{spt}
\DeclareMathOperator{\Geo}{Geo}
\DeclareMathOperator{\GeoOpt}{GeoOpt}
\def\Xint#1{\mathchoice
{\XXint\displaystyle\textstyle{#1}}%
{\XXint\textstyle\scriptstyle{#1}}%
{\XXint\scriptstyle\scriptscriptstyle{#1}}%
{\XXint\scriptscriptstyle\scriptscriptstyle{#1}}%
\!\int}
\def\XXint#1#2#3{{\setbox0=\hbox{$#1{#2#3}{\int}$ }
\vcenter{\hbox{$#2#3$ }}\kern-.6\wd0}}
\def\dashint{\Xint-}
\begin{document}

\title[Interpolated measures in $CD(K,N)$ spaces of Sturm]
{Interpolated measures with bounded density
in metric spaces satisfying the curvature-dimension conditions of Sturm}

\author{Tapio Rajala}
\address{Scuola Normale Superiore\\
Piazza dei Cavalieri 7\\
I-56127 Pisa\\ Italy}
\email{tapio.rajala@sns.it}

\thanks{The author acknowledges the support of the European Project ERC AdG *GeMeThNES* and the Academy of Finland project no. 137528.}
\subjclass[2000]{Primary 53C23. Secondary 28A33, 49Q20}
\keywords{Ricci curvature, metric measure spaces, geodesics, Poincar\'e inequality, measure contraction property}
\date{\today}


\begin{abstract}
We construct geodesics in the Wasserstein space of probability measure
along which all the measures have an upper bound on their density
that is determined by the densities of the endpoints of the geodesic.
Using these geodesics we show that a local Poincar\'e inequality
and the measure contraction property follow from the Ricci curvature bounds defined by Sturm.
We also show for a large class of convex functionals that a local Poincar\'e inequality is implied by the 
weak displacement convexity of the functional.
\end{abstract}


\maketitle

\tableofcontents

\newpage

\section{Introduction}

A definition for lower Ricci curvature bounds in metric measure spaces using mass transportation was given independently
by Sturm \cite{S2006I, S2006II} and by Lott and Villani \cite{LV2009}. 
Both definitions use convexity inequalities for functionals in the space of probability measures. Because Sturm's
definition requires these inequalities for much smaller class of functionals it is at least a priori weaker.
In their seminal papers Sturm, and Lott and
Villani established most of the essential properties of metric measure spaces with lower Ricci curvature bounds; such
as the stability under the measured Gromov-Hausdorff convergence and coincidence with the Riemannian definition.
However, one of the basic tools for doing analysis in these spaces was still missing, namely the local
Poincar\'e inequality. 

The validity of the local Poincar\'e inequality was proved by Lott and Villani \cite{LV2007}
in the special case where the space was assumed to be nonbranching, see also \cite{vR2008}.
Very recently this nonbranching assumption was removed in \cite{R2011} for spaces with lower Ricci curvature
bounds in the sense of Lott and Villani. In this paper we go one step further and 
prove the local Poincar\'e inequalities
in the case of possibly branching spaces with Ricci curvature bounded from below in the sense of Sturm.
See Section \ref{sec:definitions} for the definitions of the Ricci curvature bounds.
The constants in the Poincar\'e inequalities we prove here are essentially the same that were obtained in \cite{R2011}.
Notice that in \cite[Theorem 2]{R2011} there should also be the $\cosh$-term in the constant which we have in
the theorem below.

\begin{theorem}\label{thm:main}
 Any $CD(K,N)$ space in the sense of Sturm, with $K \in \R$ and $N \in (1,\infty)$, supports the following weak local Poincar\'e inequality.
 For every point $x \in X$ and radius $r>0$, for any continuous function $u \colon X \to \R$ and any upper gradient $g$ of $u$
 we have
 \[
  \dashint_{B(x,r)}|u - \langle u\rangle_{B(x,r)}|dm \le C r \dashint_{B(x,2 r)}gdm,
 \]
 where the constant $C$ depends on $N$, $K$ and $r$ and can be estimated from above by
 \[
  C \le 2^{N+3}e^{\sqrt{(N-1)K^-}2r}\cosh\left(2r \sqrt{\frac{K^-}{N-1}}\right)^{N-1}
 \]
 with $K^- = \max\{-K,0\}$.
 In particular, in any $CD(0,N)$ space with $N \in (1,\infty)$ we have the uniform weak local Poincar\'e inequality
 \[
  \dashint_{B(x,r)}|u - \langle u\rangle_{B(x,r)}|dm \le 2^{N+3} r \dashint_{B(x,2 r)}gdm.
 \]
\end{theorem}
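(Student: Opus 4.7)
The plan is to reduce the Poincar\'e inequality to a segment-type inequality using a dynamical Wasserstein plan whose $t$-marginals enjoy the bounded-density property promised by the title. Fix $B = B(x_0,r)$ and $B' = B(x_0,2r)$. By the triangle inequality for means together with Fubini,
\[
 \dashint_B \bigl|u - \langle u\rangle_B\bigr|\,dm \le \int\!\!\int |u(x)-u(y)|\,d\mu_0(x)\,d\mu_1(y),
\]
where $\mu_0=\mu_1=m(B)^{-1}\mathbf{1}_B\,m$. For any dynamical optimal plan $\pi$ realising a Wasserstein geodesic from $\mu_0$ to $\mu_1$, the upper-gradient inequality applied along $\pi$-almost every geodesic $\gamma$ yields $|u(\gamma(0))-u(\gamma(1))| \le d(\gamma(0),\gamma(1))\int_0^1 g(\gamma(t))\,dt$, so after integrating against $\pi$ and bounding $d(\gamma(0),\gamma(1))$ by $2r$,
\[
 \dashint_B \bigl|u - \langle u\rangle_B\bigr|\,dm \le 2r\int_0^1 \int_X g\,d\mu_t\,dt,
\]
where $\mu_t=(e_t)_\#\pi$.

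The second step invokes the main construction of the paper: among all dynamical plans connecting $\mu_0$ and $\mu_1$ there is one whose marginals $\mu_t$ are absolutely continuous with respect to $m$ with density bounded by $C'(K,N,r)\cdot m(B)^{-1}$, the constant $C'$ being dictated by the Sturm distortion coefficients $\tau_{K,N}^{(t)}(\theta)$ for $\theta$ up to $2r$. Since every $\mu_t$ is supported in $B'$, combining the density bound with the Bishop--Gromov-type volume doubling available in $CD(K,N)$ converts $\int g\,d\mu_t$ into a multiple of $\dashint_{B'}g\,dm$. Collecting constants then yields the claimed bound; the exponential factor $e^{\sqrt{(N-1)K^-}\cdot 2r}$ is the overhead in the $\tau$-coefficients relative to the Euclidean case, and the $\cosh^{N-1}$ factor is the Bishop--Gromov correction for negative lower Ricci bounds.

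The hard part will be the density construction itself. In (essentially) nonbranching $CD(K,N)$ spaces the unique optimal plan automatically has $t$-marginals controlled by the Jacobian inequality that the $\tau$-coefficients encode, and the density bound is classical. In the general Sturm setting the optimal plan is neither unique nor deterministic, and one has to \emph{select} a plan whose $t$-marginals remain absolutely continuous with the claimed density bound. This selection---rather than the Poincar\'e deduction itself---is the technical heart of the paper; presumably one discretises $\mu_0$ and $\mu_1$ into sums of atoms, applies Sturm's convexity of the R\'enyi entropy to the pairwise transports to control the resulting densities, glues back together while preventing mass from concentrating on branching sets, and passes to the limit.
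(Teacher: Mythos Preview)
There is a genuine gap in the first step. You set $\mu_0=\mu_1=m(B)^{-1}\mathbf{1}_B\,m$ and then invoke ``any dynamical optimal plan $\pi$ realising a Wasserstein geodesic from $\mu_0$ to $\mu_1$''. But when $\mu_0=\mu_1$ the Wasserstein distance is zero, the unique optimal coupling is the identity, and the geodesic is constant: $\pi$-almost every $\gamma$ satisfies $\gamma_0=\gamma_1$, so $\int_{\Geo(X)}|u(\gamma_0)-u(\gamma_1)|\,d\pi=0$. The inequality $\dashint_B|u-\langle u\rangle_B|\,dm\le\iint|u(x)-u(y)|\,d\mu_0\,d\mu_1$ is of course correct, but that double integral is against the \emph{product} coupling, which is not optimal and carries no density information from the $CD(K,N)$ condition. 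You cannot pass from the product coupling to an optimal plan without losing the left-hand side.

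The paper resolves this by transporting between two \emph{different} measures supported in $B$: it takes the median $M$ of $u$ on $B$, sets $B^\pm=\{u\gtrless M\}\cap B$, and builds the good geodesic between $m(B^+)^{-1}m|_{B^+}$ and $m(B^-)^{-1}m|_{B^-}$. Along $\pi$-almost every geodesic one then has $u(\gamma_0)\ge M\ge u(\gamma_1)$, so $|u(\gamma_0)-u(\gamma_1)|=|u(\gamma_0)-M|+|M-u(\gamma_1)|$ exactly, and this recovers $\int_B|u-M|\,dm$ (hence the mean oscillation) from the geodesic integral. The densities of the endpoints are at most $2/m(B)$, and Theorem~\ref{thm:goodgeodesic} then propagates this to all $\mu_t$. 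Your sketch of the density construction is also off: the paper does not discretise into atoms and glue. It defines an ``excess mass'' functional $\mathcal{F}_C(\mu)=\|(\rho-C)^+\|_{L^1}+\mu^s(X)$, minimises it over the set of $W_2$-midpoints (compactness plus lower semicontinuity give a minimiser), and then uses the $CD(K,N)$ inequality to spread any remaining excess mass, forcing the minimum to be zero; iterating at dyadic times and completing yields the geodesic.
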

 The abbreviations for average integrals in the theorem are defined for any $A \subset X$ with $0<m(A) < \infty$ as
 \[
  \langle u \rangle_{A} = \dashint_{A}u dm = \frac1{m(A)} \int_{A} udm.
 \]
In \cite{R2011} a local Poincar\'e type inequality was also proved in $CD(K,\infty)$ spaces in the sense of 
Lott and Villani. We also have this result using the definition of Sturm.

\begin{theorem}\label{thm:main2}
 Suppose that $(X,d,m)$ is a $CD(K,\infty)$ space in the sense of Sturm.
 Then we have the local Poincar\'e type inequality
 \[
  \int_{B(x,r)}|u - \langle u\rangle_{B(x,r)}|dm \le 8 r e^{K^-r^2/3}\int_{B(x,2 r)}gdm.
 \]
\end{theorem}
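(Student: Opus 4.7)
The plan is to parallel the argument of Theorem \ref{thm:main}: apply the density-controlled interpolation theorem developed earlier in the paper (its main technical novelty) to the normalized restriction of $m$ to $B(x,r)$, then use the upper-gradient inequality along the individual geodesics together with Fubini to convert the resulting estimate into the desired Poincar\'e-type inequality. The only genuine change compared to Theorem \ref{thm:main} is that the density bound on the intermediate measures comes from $K$-convexity of the relative entropy rather than from $S_N$-convexity.

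Concretely, I would set $\mu_0=\mu_1=m|_{B(x,r)}/m(B(x,r))$ and invoke the $CD(K,\infty)$ version of the density-controlled interpolation to produce a dynamical plan $\sigma\in\PP(\Geo(X))$ whose endpoint coupling is (essentially) the product $\mu_0\otimes\mu_1$ and whose time marginal $\mu_t=(e_t)_\#\sigma$ is absolutely continuous, supported in $B(x,2r)$, with density $\rho_t$ bounded by $4e^{K^-r^2/3}/m(B(x,r))$. The exponential factor should be exactly what the $K$-convexity of the relative entropy contributes when $W_2(\mu_0,\mu_1)\le 2r$ and $t$ is near $1/2$; the prefactor is an explicit constant from the construction.

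With this in hand, Jensen's inequality yields
\[
\int_{B(x,r)}|u-\langle u\rangle_{B(x,r)}|\,dm \le m(B(x,r))\iint|u(y)-u(z)|\,d\mu_0(y)\,d\mu_1(z),
\]
and the upper-gradient property bounds $|u(\gamma_0)-u(\gamma_1)|\le d(\gamma_0,\gamma_1)\int_0^1 g(\gamma_t)\,dt\le 2r\int_0^1 g(\gamma_t)\,dt$ along $\sigma$-a.e.\ geodesic. Fubini combined with the density bound then gives
\[
\int_{B(x,r)}|u-\langle u\rangle_{B(x,r)}|\,dm \le 2r\,m(B(x,r))\int_0^1\!\!\int_{B(x,2r)}g(z)\rho_t(z)\,dm(z)\,dt \le 8re^{K^-r^2/3}\int_{B(x,2r)}g\,dm,
\]
which is the asserted inequality.

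The main obstacle is not in this routine deduction but in the invoked density-controlled interpolation for $CD(K,\infty)$ in the sense of Sturm. In the $CD(K,N)$ case, $S_N$-convexity produces pointwise $L^\infty$ bounds on $\rho_t$ by a direct computation with the R\'enyi-type functional; in the $CD(K,\infty)$ case, the condition only delivers $K$-convexity of the relative entropy, from which $L^\infty$ control of $\rho_t$ is not immediate. Bridging this gap, while simultaneously handling possible branching of geodesics, is the content of the earlier sections of the paper, which the present proof uses as a black box.
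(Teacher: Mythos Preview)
Your plan has a genuine gap at the very first step. If you set $\mu_0=\mu_1=m|_{B(x,r)}/m(B(x,r))$, then $W_2(\mu_0,\mu_1)=0$ and the only element of $\GeoOpt(\mu_0,\mu_1)$ is the plan concentrated on constant geodesics; its endpoint coupling is the diagonal $(\mathrm{id},\mathrm{id})_\#\mu_0$, not the product $\mu_0\otimes\mu_1$. The density-controlled interpolation of the paper (Theorem~\ref{thm:goodgeodesic}) produces a \emph{Wasserstein} geodesic, i.e.\ an element of $\GeoOpt(\mu_0,\mu_1)$, and the bound \eqref{eq:densitybound_infinite} applies only to the marginals along that optimal plan. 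It does not hand you a dynamical plan with product-type endpoints and bounded intermediate densities, and in a possibly branching space there is no reason such a plan should exist. With the diagonal coupling your integral $\int|u(\gamma_0)-u(\gamma_1)|\,d\sigma$ vanishes identically and gives no information.

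The paper circumvents this via a median trick (Theorem~\ref{thm:general}). One lets $M$ be the median of $u$ on $B=B(x,r)$ and splits $B$ into $B^+=\{u\ge M\}$ and $B^-=\{u\le M\}$, each of $m$-measure at least $m(B)/2$. Theorem~\ref{thm:goodgeodesic} is then applied with the \emph{distinct} measures $\mu_0=\tfrac{1}{m(B^+)}m|_{B^+}$ and $\mu_1=\tfrac{1}{m(B^-)}m|_{B^-}$, for which $D\le 2r$; the resulting optimal plan $\pi$ has $\rho_t\le 2e^{K^-r^2/3}/m(B)$. The point of the splitting is that for $\pi$-a.e.\ $\gamma$ one has $u(\gamma_0)\ge M\ge u(\gamma_1)$, so
\[
|u(\gamma_0)-u(\gamma_1)|=|u(\gamma_0)-M|+|M-u(\gamma_1)|,
\]
and integrating recovers $\tfrac{1}{m(B)}\int_B|u-M|\,dm$ and hence the left-hand side. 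From there your Fubini-plus-upper-gradient computation goes through verbatim and yields the stated constant $8r\,e^{K^-r^2/3}$.
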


The proof of the local Poincar\'e inequalities is based on the existence of geodesics in the 
Wasserstein space along which the densities of all the measures are suitably bounded from above. The existence of such
geodesic is already interesting by itself. Density bounds along geodesics have turned out to be useful 
in analysis on metric spaces and in particular on genuinely infinite
dimensional metric spaces where the lack of doubling measures restricts the use of local Poincar\'e inequalities.
See \cite{AGS2011} for recent development in this direction.
Using the notation which will be introduced in Section \ref{sec:definitions} we can state the existence
of the good geodesics as follows.

\begin{theorem}\label{thm:goodgeodesic}
 Let $(X,d,m)$ be a $CD(K,N)$ space in the sense of Sturm for some $K \in \R$ and $N \in (1,\infty]$.
 Then for any $\mu_0, \mu_1 \in \mathcal{P}^{ac}(X,m)$ with $D = \diam(\spt\mu_0\cup\spt \mu_1) < \infty$ there
 exists a geodesic $\Gamma \in \Geo(\mathcal{P}(X))$ so that $\Gamma_0 = \mu_0$, $\Gamma_1 = \mu_1$ and 
 for all $t \in [0,1]$ we have $\Gamma_t = \rho_tm$ with
 \begin{equation}\label{eq:densitybound_finite}
  ||\rho_t||_{L^{\infty}(X,m)} \le e^{\sqrt{(N-1)K^-}D} \max\{||\rho_0||_{L^{\infty}(X,m)},||\rho_1||_{L^{\infty}(X,m)}\}
 \end{equation}
 if $N < \infty$ and with
 \begin{equation}\label{eq:densitybound_infinite}
  ||\rho_t||_{L^{\infty}(X,m)} \le e^{K^-D^2/12} \max\{||\rho_0||_{L^{\infty}(X,m)},||\rho_1||_{L^{\infty}(X,m)}\}
 \end{equation}
 if $N = \infty$.
\end{theorem}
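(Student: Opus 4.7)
Following the mosaic-geodesic strategy of \cite{R2011}, I would construct $\Gamma$ as a weak Wasserstein limit of curves assembled from many ``small-scale'' sub-geodesics supplied by Sturm's $CD(K,N)$ axiom. Fix an optimal transference plan $\pi$ between $\mu_0$ and $\mu_1$, and for each $n \in \N$ partition $\spt \pi \subset X\times X$ into Borel pieces $A^{n,1},\ldots,A^{n,k_n}$ of diameter at most $1/n$. Writing $\pi^{n,i} = \pi|_{A^{n,i}}$, $m_{n,i} = \pi^{n,i}(X\times X)$, and $\mu^{n,i}_0,\mu^{n,i}_1$ for the marginals, normalize to probability measures $\nu^{n,i}_j = \mu^{n,i}_j/m_{n,i}$ and apply Sturm's axiom to obtain geodesics $\Gamma^{n,i} \in \Geo(\PP(X))$ between $\nu^{n,i}_0$ and $\nu^{n,i}_1$ along which the R\'enyi functional (for $N < \infty$) or the relative entropy (for $N = \infty$) satisfies the distortion-convexity inequality. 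The candidate curve
$$ \Gamma^n_t := \sum_{i=1}^{k_n} m_{n,i}\, \Gamma^{n,i}_t $$
lies in $\Geo(\PP(X))$ with endpoints $\mu_0,\mu_1$, as follows by gluing dynamical plans together with convexity of $W_2^2$.

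The heart of the argument is converting Sturm's integral inequality into a pointwise density bound on each piece. Since $\nu^{n,i}_j$ is supported within $1/n$ of a single basepoint $x^{n,i}_j$, with $\theta_{n,i} := d(x^{n,i}_0,x^{n,i}_1) \le D$, the distortion coefficients $\tau^{(t)}_{K,N}$ are essentially constant over each piece. A H\"older/Jensen-type argument exploiting this concentration yields, in the limit, a bound of the form
$$\|\rho^{n,i}_t\|_\infty \le (1+o_n(1))\,S^{(t)}_{K,N}(\theta_{n,i})\max\{\|\nu^{n,i}_0\|_\infty,\|\nu^{n,i}_1\|_\infty\},$$
where $S^{(t)}_{K,N}(\theta)$ is a specific combination of the Sturm $\tau$-coefficients (morally, $(\tau^{(t)}_{K,N}(\theta) + \tau^{(1-t)}_{K,N}(\theta))^{-N}$). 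Elementary hyperbolic identities provide the uniform estimate $\sup_{t\in[0,1],\theta\in[0,D]} S^{(t)}_{K,N}(\theta) \le e^{\sqrt{(N-1)K^-}D}$; for $N = \infty$ the analogous entropy computation, using $\int_0^1 s(1-s)\,ds = 1/6$, produces $e^{K^-D^2/12}$. Since $\mu^{n,i}_j \le \mu_j$, we have $m_{n,i}\|\nu^{n,i}_j\|_\infty \le \|\rho_j\|_\infty$, and summing over $i$---while controlling the overlap of the $\Gamma^{n,i}_t$, which becomes negligible as transport on each vanishing atom becomes deterministic---gives
$$\|\rho^n_t\|_\infty \le (1+o_n(1))\, e^{\sqrt{(N-1)K^-}D}\max\{\|\rho_0\|_\infty,\|\rho_1\|_\infty\}.$$
Tightness of $(\Gamma^n)$ in $\PP(X)$ (from the bounded diameter hypothesis) and equi-Lipschitzness in $W_2$ extract a subsequential limit $\Gamma \in \Geo(\PP(X))$; the $L^\infty$ bound descends to $\Gamma_t$ by weak-$*$ lower semicontinuity.

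\textbf{Main obstacle.} The principal technical difficulty, not present in the Lott--Villani setting where displacement convexity of functionals of type $\rho \mapsto \int(\rho-M)^+\,dm$ would yield a pointwise density bound directly, is localizing Sturm's \emph{integral} R\'enyi/entropy inequality into a pointwise $L^\infty$ estimate on each tiny piece. Two delicate issues intertwine: first, the quantitative version of ``on pieces of vanishing diameter the integral inequality becomes essentially pointwise,'' which must handle the interaction between the vanishing support scale $1/n$ and the normalized $L^\infty$ norms $\|\nu^{n,i}_j\|_\infty$; and second, controlling the overlap of the $\Gamma^{n,i}_t$ when summing in $i$. Because $(X,d,m)$ may be branching, the second point cannot be derived from any standard non-branching argument and must instead be obtained by exploiting the deterministic character of transport on each atom as the partition refines, so that distinct pieces contribute to asymptotically disjoint regions of $X$ in a quantitative sense as $n \to \infty$.
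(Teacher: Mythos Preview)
Your proposal has a genuine gap at precisely the step you flag as the ``main obstacle,'' and the difficulty is not merely technical: the claimed H\"older/Jensen localization does not exist. Even on a piece where $\nu^{n,i}_0$ and $\nu^{n,i}_1$ are essentially uniform on sets of $m$-measure $\epsilon$, Sturm's inequality for $\sE_N$ (or $\sE_\infty$) only yields a \emph{lower} bound on $m(\spt\Gamma^{n,i}_t)$, not an upper bound on $\|\rho^{n,i}_t\|_\infty$. Concretely, a midpoint of the form $\rho = M\chi_{A_1} + \delta\chi_{A_2}$ with $M$ arbitrarily large, $m(A_1)$ tiny, and $m(A_2)$ of order $\epsilon$ can satisfy $\int\rho^{1-1/N}\,dm \ge \epsilon^{1/N}$ while $\|\rho\|_\infty = M \gg 1/\epsilon$. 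Nothing in the $CD(K,N)$ axiom rules out such a midpoint along \emph{some} geodesic, so shrinking the pieces does not force the density to be bounded. The second obstacle (overlap control) is also unresolved: your appeal to ``transport becoming deterministic on vanishing atoms'' is exactly the nonbranching heuristic, which you are not allowed to assume.

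The paper's route is entirely different and is designed to bypass both issues simultaneously. Rather than trying to extract a pointwise bound from the integral inequality, one fixes the target threshold $C$, considers the excess-mass functional $\mathcal{F}_C(\mu)=\|(\rho-C)^+\|_{L^1}+\mu^s(X)$, and \emph{selects} a midpoint minimizing $\mathcal{F}_C$ over $\mathcal{M}(\mu_0,\mu_1)$ (existence uses compactness for $N<\infty$ and a separate tightness argument for $N=\infty$). One then shows the minimum is zero by contradiction: if the minimizer $\nu$ had positive excess, the mass sitting above level $C$ could be ``spread'' via the $CD$ condition (Proposition~\ref{prop:spreading}) onto a set of strictly larger $m$-measure, producing another midpoint with strictly smaller $\mathcal{F}_C$. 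Iterating this midpoint selection at dyadic times and passing to the limit via lower semicontinuity of $\mathcal{F}_C$ gives the geodesic; the constants $e^{\sqrt{(N-1)K^-}D}$ and $e^{K^-D^2/12}$ arise as the infinite products $\prod_{n\ge 0}C(N,K,2^{-n}D)$ of the single-step constants. The point is that the $CD$ inequality is used not to bound a density directly but only to guarantee enough \emph{room} to redistribute excess mass---a variational mechanism your mosaic construction does not contain.
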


In Theorem \ref{thm:goodgeodesic} we have the existence of a good geodesic between two absolutely continuous measures. 
If in the case $N < \infty$ we
construct a similar geodesic between a point mass and an absolutely continuous measure, we obtain the so called measure
contraction property as defined by Ohta \cite{O2007}. Measure contraction property can also be regarded as a generalization of Ricci curvature bounds.
See Section \ref{sec:definitions} for the definition of this property.

\begin{theorem}\label{thm:mcp}
 Any $CD(K,N)$ space has the $MCP(K,N)$ property.
\end{theorem}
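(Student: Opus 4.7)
Fix $x \in X$ and a measurable set $A \subset X$ with $0 < m(A) < \infty$ (and $\diam A \le \pi\sqrt{(N-1)/K}$ when $K > 0$); set $\mu_1 := m(A)^{-1} m|_A$, with density $\rho_1 \equiv m(A)^{-1}$ on $A$. The plan is to repeat the construction that yields Theorem \ref{thm:goodgeodesic}, but with the left endpoint an approximation of $\delta_x$, and then pass to the limit. The target is an optimal dynamical plan $\Pi \in \mathcal{P}(\Geo(X))$ with $(e_0)_*\Pi = \delta_x$, $(e_1)_*\Pi = \mu_1$, witnessing Ohta's $MCP(K,N)$ distortion inequality
\[
 m(A)^{-1} m \;\ge\; (e_t)_*\!\left[\tau^{(t)}_{K,N}\bigl(d(\gamma_0,\gamma_1)\bigr)^N \Pi\right],\qquad t \in [0,1].
\]

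For each $n \in \N$, take $\mu_0^{(n)} := m(B(x,1/n))^{-1} m|_{B(x,1/n)} \in \mathcal{P}^{ac}(X,m)$, so that $\mu_0^{(n)} \rightharpoonup \delta_x$ narrowly while $\diam(\spt \mu_0^{(n)} \cup \spt \mu_1)$ stays bounded uniformly in $n$ by some $D < \infty$. Applying Theorem \ref{thm:goodgeodesic} to the pair $(\mu_0^{(n)},\mu_1)$ produces a dynamical plan $\Pi^{(n)} \in \mathcal{P}(\Geo(X))$ whose intermediate densities $\rho_t^{(n)}$ are uniformly $L^\infty$-bounded. What is really needed from its construction, rather than the $L^\infty$ conclusion alone, is the underlying pointwise Sturm-type inequality
\[
 \rho_t^{(n)}(\gamma_t)^{-1/N} \;\ge\; \tau^{(1-t)}_{K,N}\bigl(d(\gamma_0,\gamma_1)\bigr)\rho_0^{(n)}(\gamma_0)^{-1/N} + \tau^{(t)}_{K,N}\bigl(d(\gamma_0,\gamma_1)\bigr)\rho_1(\gamma_1)^{-1/N}
\]
holding for $\Pi^{(n)}$-a.e.\ $\gamma$ and every $t \in [0,1]$; this is the natural output of the $CD(K,N)$ construction used to prove Theorem \ref{thm:goodgeodesic} and specialises to the $L^\infty$ bound after taking the supremum over $\gamma$.

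Tightness of $\{\Pi^{(n)}\}$ follows from the uniform diameter bound, so after extraction $\Pi^{(n)} \rightharpoonup \Pi \in \mathcal{P}(\Geo(X))$ with $(e_0)_*\Pi = \delta_x$ and $(e_1)_*\Pi = \mu_1$ by continuity of the evaluation maps. In the pointwise inequality above, $\rho_0^{(n)}(\gamma_0)^{-1/N} \to 0$ as $n \to \infty$ (since the density of $\mu_0^{(n)}$ blows up on a shrinking support), and $d(\gamma_0,\gamma_1) \to d(x,\gamma_1)$ along the converging support. What remains is
\[
 \rho_t(\gamma_t)^{-1/N} \;\ge\; \tau^{(t)}_{K,N}\bigl(d(x,\gamma_1)\bigr) m(A)^{1/N} \qquad \Pi\text{-a.e.},
\]
where $(e_t)_*\Pi = \rho_t m$; raising to the $-N$-th power gives $\rho_t(\gamma_t) \le m(A)^{-1}\tau^{(t)}_{K,N}(d(x,\gamma_1))^{-N}$ $\Pi$-a.e., which integrates to exactly the Ohta inequality above.

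The delicate step is the passage to the limit in the pointwise Sturm bound: the densities $\rho_t^{(n)}$ themselves diverge near $x$ as $t \to 0$, so one cannot pass to the limit pointwise in a naive way. Instead, the limit must be taken on the integrated (Rényi-entropy) form of the inequality, using narrow convergence of $\Pi^{(n)}$ together with the boundedness of the coefficients $\tau^{(\cdot)}_{K,N}$ on the bounded range of achievable distances, and the pointwise statement $\Pi$-a.e.\ is then recovered by restriction---applying the integrated inequality with $\mu_1$ replaced by $m(B)^{-1} m|_B$ for arbitrary Borel $B \subset A$ and letting $B$ shrink. This localisation is already the technique used inside the proof of Theorem \ref{thm:goodgeodesic}, so the additional work needed to handle the Dirac endpoint is essentially bookkeeping.
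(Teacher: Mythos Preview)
There is a genuine gap. Your argument hinges on the claim that the construction behind Theorem~\ref{thm:goodgeodesic} produces, for $\Pi^{(n)}$-a.e.\ geodesic, the pointwise inequality
\[
 \rho_t^{(n)}(\gamma_t)^{-1/N} \ge \tau^{(1-t)}_{K,N}\bigl(d(\gamma_0,\gamma_1)\bigr)\rho_0^{(n)}(\gamma_0)^{-1/N} + \tau^{(t)}_{K,N}\bigl(d(\gamma_0,\gamma_1)\bigr)\rho_1(\gamma_1)^{-1/N}.
\]
This is not what that construction gives. The good geodesic of Theorem~\ref{thm:goodgeodesic} is built by repeatedly \emph{minimising the excess-mass functional} $\mathcal{F}_C$ over midpoint sets; the $CD(K,N)$ condition enters only through Proposition~\ref{prop:spreading} to force the minimiser to have $\mathcal{F}_C=0$. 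The output is exactly the $L^\infty$ bound \eqref{eq:densitybound_finite} and nothing finer. A pointwise inequality of the form you write, valid along \emph{some} geodesic, is essentially the Lott--Villani condition and would resolve the open Question~\ref{q1}. The ``localisation by shrinking $B\subset A$'' you invoke is likewise not present in the proof of Theorem~\ref{thm:goodgeodesic}; restricting $\mu_1$ to a subset changes the optimal plan, and in a branching space there is no reason the restricted plan is compatible with the one you already have.

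Even setting the pointwise claim aside, the $L^\infty$ bound you do get from Theorem~\ref{thm:goodgeodesic} is
\[
 \|\rho_t^{(n)}\|_{L^\infty}\le e^{\sqrt{(N-1)K^-}D}\max\{\|\rho_0^{(n)}\|_{L^\infty},\|\rho_1\|_{L^\infty}\},
\]
and $\|\rho_0^{(n)}\|_{L^\infty}=m(B(x,1/n))^{-1}\to\infty$, so nothing survives the limit $n\to\infty$. The paper avoids this by working with $\mu_0=\delta_x$ from the start: since $\delta_x$ is purely singular, its contribution to the right-hand side of \eqref{eq:CD-def} vanishes, and one applies the $CD$ inequality between $\delta_x$ and an already-constructed intermediate measure $(e_t)_\#\pi$. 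The annular decomposition (Remark~\ref{rmk:annulardisjoint}) makes the coefficients $\beta_t(\gamma_0,\gamma_1)$ nearly constant on each piece, and then the excess-mass minimisation of Section~\ref{sec:construction} turns the integrated R\'enyi bound into the sharp $L^\infty$ density bound required by $MCP(K,N)$ (Lemma~\ref{lma:mcp_sharpmid}). Compactness of $\GeoOpt(\delta_x,\mu_1)$ and lower semicontinuity of $\mathcal{F}_C$ finish the job.
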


The measure contraction property, just like the local Poincar\'e inequalities, was already known to hold
in $CD(K,N)$ spaces under the nonbranching assumption \cite{S2006II}.
There are many definitions of the measure contraction
property. A stronger version than what we consider here was given by Sturm in \cite{S2006II} where he also showed that a
different type of Poincar\'e inequality follows from this definition without any assumption on nonbranching.
It should be emphasized that we prove Theorem \ref{thm:mcp} only with the weaker measure contraction property defined by Ohta.
 Using the results of this paper the implications between different conditions 
without any assumption on nonbranching can now be written as follows (compare this to the similar diagram in \cite{R2011}).

\[
\xymatrix {
   *+[F-,] {\txt{$CD(K,N)$ \`a la \\ \textsc{Lott-Villani}}} \ar @{=>} [r]
   & *+[F-,]{\txt{$CD(K,N)$ \`a la \\ \textsc{Sturm}}} \ar @{=>} [d] \ar @{=>} [r]
   & *+[F-,]{\txt{$MCP(K,N)$ \`a la \\ \textsc{Ohta}}}\\
   & *+[F]{\txt{local Poincar\'e\\ inequality}}
   &
}
\]

It is known that the measure contraction property does not imply the curvature-dimension bounds in the sense of Sturm.
Obviously a local Poincar\'e inequality does not imply $MCP(K,N)$.
So, in the above diagram the validity of only two possible implications is open:

\begin{question}\label{q1}
 Are $CD(K,N)$ spaces in the sense of Sturm also $CD(K,N)$ spaces in the sense of Lott and Villani?
\end{question}

Again this is known to be true under the extra assumption of nonbranching \cite{V2009}.
If the answer to Question \ref{q1} is positive, the methods of this paper might help in proving it.
Indeed, what would be needed for the proof would be geodesics along which all the functionals
used in the definition by Lott and Villani satisfy a convexity inequality. The theme of this paper
is to find better geodesics than the ones given directly by the curvature-dimension condition.
However, we were not able to show the existence of geodesics that would answer Question \ref{q1}.

\begin{question}\label{q2}
 Does a local Poincar\'e inequality follow already from the $MCP(K,N)$ as defined by Ohta?
\end{question}

Because the definition of the measure contraction property involves a point mass, see Section \ref{sec:definitions}, the
proof for the local Poincar\'e inequalities as introduced in \cite{R2011} does not seem to work in spaces
with only the property $MCP(K,N)$. Notice that again in nonbranching spaces $MCP(K,N)$ in the sense of Ohta
implies a local Poincar\'e inequality \cite{vR2008}.


The paper is organized as follows. In Section \ref{sec:definitions} we give the relevant definitions and background.
In Section \ref{sec:construction} we construct the good geodesics of Theorem \ref{thm:goodgeodesic}. Here the construction
in the case $N= \infty$ requires more work than in the boundedly compact case because the existence of suitable minimizers
does not immediately follow from a compactness result.

In Section \ref{sec:poincare} we derive the local Poincar\'e inequalities of Theorem \ref{thm:main} and
Theorem \ref{thm:main2} from the existence of good geodesic stated in Theorem \ref{thm:goodgeodesic}.
The validity of the local Poincar\'e inequalities are stated in a more general form in Theorem \ref{thm:general}.
In this section we also show that Theorem \ref{thm:general} can be applied for example in boundedly
compact spaces where a suitable functional is weakly displacement convex. 

In the final section, Section \ref{sec:mcp}, we prove Theorem \ref{thm:mcp} which says that the $CD(K,N)$ spaces
satisfy $MCP(K,N)$.
Here the difference to the rest of the paper is that we will need to construct the good geodesics between a point mass
and an absolutely continuous measure. However, the strategy of constructing geodesics which is used in Section 
\ref{sec:construction} works also in this case with only minor modifications.

\begin{ack}
 Many thanks are due to Luigi Ambrosio for his mentoring and valuable feedback. Special thanks are also due to
 Karl-Theodor Sturm for suggesting the approach of constructing geodesics by minimizing functionals. We also thank
 Nicola Gigli for his excellent suggestions and comments on this work.
\end{ack}

\section{Preliminaries}\label{sec:definitions}

All the metric measure spaces $(X,d,m)$ that we consider are assumed to be complete, separable and geodesic.
Recall that a metric space $(X,d)$ is called locally compact if every point has a compact neighbourhood and it is
called boundedly compact if every bounded closed set is compact. Analogously the measure $m$ is called
locally finite if every point has a neighbourhood with finite $m$-measure and it is called boundedly
finite if every bounded set has finite $m$-measure.
Notice that locally finite measures in boundedly compact spaces are also boundedly finite.

We denote the support of a measure $\mu$ by $\spt \mu$.
By $\mathcal{P}(X)$ we mean the set of Borel probability measures on $X$ and by $\mathcal{P}^{ac}(X,m) \subset \mathcal{P}(X)$
the set of probability measures in $X$ that are absolutely continuous with respect to the measure $m$.
When we say that a measure is absolutely continuous without reference to any other measure it is understood that 
it is absolutely continuous with respect to the fixed measure $m$ of the metric measure space.
We say that a measure $m$ is doubling (with a constant $1\le C<\infty$) if for all $x \in X$ and $0 < r < \diam(X)$ we have
\[
 m(B(x,2r)) \le C m(B(x,r)).
\]

Recall that any geodesic in a metric space $(X,d)$ can be reparametrized to be a continuous mapping $\gamma \colon [0,1] \to X$ with
\[
 d(\gamma(t),\gamma(s)) = |t-s|d(\gamma(0),\gamma(1)) \qquad \text{for all } 0 \le t \le s \le 1.
\]
We denote the space of all the geodesics of the space $X$ with such parametrization by $\Geo(X)$.
For a geodesic $\gamma \in \Geo(X)$ and $t \in [0,1]$ we will use the abbreviation $\gamma_t = \gamma(t)$.
We equip the space $\Geo(X)$ with a distance 
\[
 d^*(\gamma,\tilde\gamma) = \max_{t \in [0,1]}d(\gamma_t,\tilde\gamma_t).
\]

A metric space is called branching if there exist two distinct geodesics starting from the same point
which follow the same path for some initial time interval and then become disjoint. A nonbranching space
is a space that is not branching.

\subsection{Metric spaces with a local Poincar\'e inequality}
The importance of doubling measures and local Poincar\'e inequalities in the analysis on metric spaces
became evident in the pioneering works of Cheeger \cite{C1999} and Heinonen and Koskela \cite{HK1998}.
Since then these two properties have become standard assumptions in the theory.
Studying which spaces satisfy them is now a significant part of the theory. For an introduction on the
analysis done on metric spaces we refer to the book by Heinonen \cite{H2001}.

A metric measure space $(X,d,m)$ admits a weak local $(q,p)$-Poincar\'e inequality with $1 \le p \le q < \infty$ if there
exist constants $\lambda \ge 1$ and $ 0 < C < \infty$ such that for any continuous function $u$ defined on $X$, any point 
$x \in X$ and radius $r>0$ such that $m(B(x,r)) > 0$ and any upper gradient $g$ of $u$ we have
\begin{equation}\label{eq:Poincaredefinition}
  \left(\dashint_{B(x,r)}|u - \langle u\rangle_{B(x,r)}|^qdm\right)^{1/q} \le C r \left(\dashint_{B(x,\lambda r)}g^pdm\right)^{1/p},
\end{equation}
where the barred integral denotes the average integral and $\langle u\rangle_{B(x,r)}$ denotes the average of $u$ in the
ball $B(x,r)$. Recall that, as introduced in \cite{HK1998}, a Borel function $g \colon X \to [0,\infty]$ is an upper gradient
of $u$ if for any $\gamma \in \Geo(X)$ with length $l(\gamma) < \infty$ we have
\[
 |u(\gamma_0) - u(\gamma_1)| \le l(\gamma) \int_0^1g(\gamma_t)dt.
\]

We will consider here weak local $(1,1)$-Poincar\'e inequalities which we simply call weak local Poincar\'e inequalities.
The word \emph{weak} here refers to the fact that we allow the ball on the right-hand side of \eqref{eq:Poincaredefinition} to be
larger than the one on the left. If the balls on both sides of the inequality can be taken to be the same, meaning that we can
take $\lambda = 1$, the inequality is called a strong local Poincar\'e inequality. In a doubling geodesic metric space the weak
local Poincar\'e inequality implies the strong one, with possibly a different constant $C$, see \cite{HK1995} and also \cite{HK2000}.

We already know from a result proved by Buser \cite{B1982} that a Riemannian manifold with nonnegative Ricci curvature supports a
local Poincar\'e inequality. Moreover, in the case of measured Gromov-Hausdorff limits of Riemannian manifolds with Ricci curvature
bounded below local Poincar\'e inequalities are also known to hold \cite{CC2000}. In \cite{LV2007} a local Poincar\'e inequality
was proved in nonbranching metric spaces with nonnegative Ricci curvature, see also \cite{vR2008}. In \cite{R2011} this result was generalized (with the 
definition used by Lott and Villani) by removing the assumption for the space to be nonbranching.
This paper continues this line of investigation.
Notice that Poincar\'e inequalities have also been proved in many other classes of metric spaces, for example in locally linearly 
contractible Ahlfors-regular metric spaces \cite{S1996}.

\subsection{Optimal mass transportation and the Wasserstein distance}

The definitions of lower Ricci curvature bounds considered by Lott, Sturm and Villani use the theory of optimal mass transportation.
This theory has a long history starting from the work of Monge in the 18th century \cite{M1781}. In the modern formulation of the
mass transportation problem, which was developed by Kantorovich \cite{K1942, K1948}, the transportation of the mass is optimized among all
possible measures with correctly fixed marginals.
The main reason for using measures instead of mappings for transporting mass is that with measures in most situations we have the existence
of optimal transportation, and more importantly the existence of a transport to begin with. The problem with transport maps is that
they cannot split measure, which is sometimes necessary. See for instance the recent paper \cite{G2011} for the assumptions that are
needed for the existence of optimal mappings in the spaces we study here.
We refer to the book by Villani \cite{V2009} for a detailed account on the history and modern theory of optimal mass transportation.

The consideration of optimal mass transportation leads to the definition of Wasserstein space $(\mathcal{P}(X), W_2)$.
The distance between two probability measures $\mu, \nu \in \mathcal{P}(X)$ in this space is given by
\[
 W_2(\mu, \nu) = \left(\inf\left\{\int_{X\times X} d(x,y)^2d\sigma(x,y)\right\}  \right)^{1/2},
\]
where the infimum is taken over all $\sigma \in \mathcal{P}(X \times X)$ with $\mu$ as its first marginal and $\nu$ as the second,
i.e. $\mu(A) = \sigma(A \times X)$ and $\nu(A) = \sigma(X \times A)$ for all Borel subsets $A$ of the space $X$.
Notice that in the case where the distance $d$ is not bounded the function $W_2$ is strictly speaking not a distance as the
above infimum can also take an infinite value. This will not be an issue for us since all the measures in $\mathcal{P}(X)$ we
consider have bounded support.

An important thing for us to notice is that any geodesic $\Gamma \in \Geo(\mathcal{P}(X))$ between two measures $\mu, \nu \in \mathcal{P}(X)$
in the space $(\mathcal{P}(X), W_2)$  can be realized as a measure $\pi \in \mathcal{P}(\Geo(X))$ so that $\Gamma_t = (e_t)_\#\pi$, where
$e_t(\gamma) = \gamma_t$ for any geodesic $\gamma$ and $t \in [0,1]$ and $f_\#\mu$ denotes the push-forward of the measure $\mu$
under $f$, see for example \cite[Corollary 7.22]{V2009}. This realization is convenient for us when we want to
translate information from the geodesics on $\mathcal{P}(X)$ to the geodesics on $X$.
The space consisting of all measures $\pi\in \mathcal{P}(\Geo(X))$
for which the mapping $t \mapsto (e_t)_\#\pi$ is a geodesic in $\mathcal{P}(X)$ from $\mu = (e_0)_\#\pi$ to $\nu = (e_1)_\#\pi$ 
is denoted by $\GeoOpt(\mu, \nu)$. We equip this space with a distance
\[
 \mathcal{W}_2(\pi_1,\pi_2) = \sup_{t \in [0,1]} W_2((e_t)_\#\pi_1,(e_t)_\#\pi_2).
\]

\subsection{Ricci curvature lower bounds in metric spaces}

There are three different sets of definitions of lower Ricci curvature bounds in metric spaces that are discussed in this paper.
In two of the definitions suitable functionals are required to satisfy a convexity inequality between measures in the Wasserstein space
$(\mathcal{P}(X),W_2)$.

One set of definitions follows the approach by Lott and Villani \cite{LV2009} and requires that 
between any two probability measures that have bounded Wasserstein distance between them
there is at least one geodesic in the Wasserstein space along which all the 
functionals in a displacement convexity class $\mathcal{DC}_N$ satisfy a convexity inequality.
The second set of definitions, following the work of Sturm \cite{S2006I, S2006II}, requires the same
convexity inequality only for the critical entropy functionals.
The third definition, the measure contraction property, is different from the two previous ones. It requires
the existence of a geodesic between any point mass and any uniformly distributed measure along which we have
a suitable density bound.

To be more precise on the first two sets of definitions, in their paper Lott and Villani gave a definition 
for nonnegative $N$-Ricci curvature with
$N \in [1,\infty)$ and a definition for $\infty$-Ricci curvature being bounded below by $K \in \R$.
Sturm on the other hand defined for all $N \in [1,\infty]$ spaces where $N$-Ricci curvature is bounded from
below by a constant $K \in \R$.
Although Sturm's definition is a priori weaker, in nonbranching metric spaces
these two sets of definitions agree, see for example \cite{V2009}. In nonbranching spaces both these definitions,
with $N < \infty$, also imply the measure contraction property.

Let us now define for $N \in (1,\infty)$ the spaces where $N$-Ricci curvature is bounded from 
below by a constant $K \in \R$ in the sense of Sturm.
For this we will need the R\'enyi entropy functional $\sE_N \colon \mathcal{P}(X) \to [-\infty,0]$ defined as
\[
 \sE_N(\mu) = -\int_X \rho^{1-1/N} dm,
\]
where $\mu = \rho m + \mu^s$ with $\mu^s \perp m$.

For $K \in \R$ and $N \in (1,\infty)$, we define
\[
 \beta_t(x,y) = \begin{cases}
                 \infty  & \text{if }K> 0 \text{ and }\alpha >\pi,\\
                 \left(\frac{\sin(t\alpha)}{t\sin \alpha}\right)^{N-1}  & \text{if }K> 0 \text{ and }\alpha \in [0,\pi],\\
                 1  & \text{if }K = 0,\\
                 \left(\frac{\sinh(t\alpha)}{t\sinh \alpha}\right)^{N-1}  & \text{if }K < 0,
                \end{cases}
\]
where
\[
 \alpha = \sqrt{\frac{|K|}{N-1}}d(x,y).
\]
Sometimes we write $\beta_t(l)$ which is understood to be the above quantity $\beta_t(x,y)$ with $d(x,y)$ replaced by $l$.

\begin{definition}
We say that a locally compact $\sigma$-finite metric measure space $(X,d,m)$ is a $CD(K,N)$ space (in the sense of Sturm), with the interpretation that it has $N$-Ricci curvature
bounded below by $K$, if for any two measures $\mu_0, \mu_1 \in \mathcal{P}(X)$ with $W_2(\mu_0,\mu_1)<\infty$ there
exists $\pi \in \GeoOpt(\mu_0,\mu_1)$ so that along the Wasserstein geodesic $\mu_t = (e_t)_\#\pi$ for every $t \in [0,1]$
and $N' \ge N$ we have
\begin{equation}\label{eq:CD-def}
 \sE_{N'}(\mu_t) \le - \iint_{X\times X}(1-t)\left(\frac{\beta_{1-t}(x_0,x_1)}{\rho_0(x_0)}\right)^{\frac1{N'}}
                 + t\left(\frac{\beta_{t}(x_0,x_1)}{\rho_1(x_1)}\right)^{\frac1{N'}}d\sigma(x_0,x_1),
\end{equation}
where we have written $\mu_0 = \rho_0m + \mu_0^s$ and $\mu_1 = \rho_1m + \mu_1^s$ with $\mu_0^s \perp m$, $\mu_1^s \perp m$ and 
$\sigma = (e_0,e_1)_\#\pi$.
\end{definition}

In this paper we will only need the above inequality with $N' = N$. 
From the Bishop-Gromov inequality in $CD(K,N)$ spaces \cite[Theorem 2.3]{S2006II} we have the doubling property of $CD(K,N)$ spaces.
Recall the notation $K^-=\max\{-K,0\}$. 
\begin{proposition}\label{prop:doubling}
 Any $CD(K,N)$ space with diameter bounded from above by $L$ is doubling with a constant
 \[
  2^N \cosh\left(L \sqrt{\frac{K^-}{N-1}}\right)^{N-1}.
 \]
 In particular, any $CD(0,N)$ space is doubling with a constant $2^N$.
\end{proposition}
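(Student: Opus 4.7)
The plan is to deduce the doubling property from the Bishop--Gromov volume comparison theorem for $CD(K,N)$ spaces (\cite[Theorem 2.3]{S2006II}), specializing the comparison to $R=2r$ and performing an elementary change of variables to extract the explicit constant.

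If $K \ge 0$, then $K^-=0$ and Bishop--Gromov already gives the Euclidean comparison $m(B(x,2r))/m(B(x,r)) \le 2^N$, which matches the claim since $\cosh(0)=1$.

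If $K<0$, write $a = \sqrt{K^-/(N-1)}$. Bishop--Gromov yields
\[
 \frac{m(B(x,2r))}{m(B(x,r))} \le \frac{\int_0^{2r}\sinh(at)^{N-1}\,dt}{\int_0^{r}\sinh(at)^{N-1}\,dt}.
\]
Substituting $t=2s$ in the numerator and applying the double-angle identity $\sinh(2as) = 2\sinh(as)\cosh(as)$ gives
\[
 \int_0^{2r}\sinh(at)^{N-1}\,dt = 2^N\int_0^{r}\sinh(as)^{N-1}\cosh(as)^{N-1}\,ds.
\]
Since $\cosh$ is monotone increasing and the diameter bound forces $r \le L$, we have $\cosh(as) \le \cosh(aL)$ for $s\in[0,r]$, so pulling this factor out of the integral and dividing gives precisely the constant $2^N\cosh(L\sqrt{K^-/(N-1)})^{N-1}$.

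There is no serious obstacle here: the substantive work was already carried out by Sturm in establishing Bishop--Gromov in \cite{S2006II}, and the remaining step is a short manipulation of the model-space integrals. The only minor point to verify is that Bishop--Gromov applies for all radii $r<L$ (not only for small ones), but this is built into the statement of the comparison theorem.
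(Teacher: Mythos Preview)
Your proposal is correct and follows exactly the approach indicated in the paper: the paper does not give a proof but simply records the proposition as a consequence of the Bishop--Gromov inequality \cite[Theorem 2.3]{S2006II}, and you have supplied the short computation that extracts the explicit doubling constant from that comparison.
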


The Ricci curvature bound from below without reference to the dimension of the space is defined using the Shannon entropy 
$\sE_\infty \colon \mathcal{P}(X) \to [-\infty,\infty]$ which is defined as
\[
 \sE_\infty(\mu) = \int_X \rho\log\rho dm,               
\]
if $\mu = \rho m$ is absolutely continuous with respect to $m$ and $\infty$ otherwise.

\begin{definition}
We say that $(X,d,m)$, with a locally finite measure $m$, is a $CD(K,\infty)$ space (in the sense of Sturm), with the interpretation that it has $\infty$-Ricci curvature
bounded below by $K$, if for any two measures $\mu_0, \mu_1 \in \mathcal{P}(X)$ with $W_2(\mu_0,\mu_1)<\infty$ there
exists $\pi \in \GeoOpt(\mu_0,\mu_1)$ so that along the Wasserstein geodesic $\mu_t = (e_t)_\#\pi$ for every $t \in [0,1]$ we have
\begin{equation}\label{eq:CD-def2}
 \sE_{\infty}(\mu_t) \le (1-t)\sE_{\infty}(\mu_0) + t\sE_{\infty}(\mu_1) - \frac{K}2t(1-t)W_2^2(\mu_0,\mu_1).
\end{equation}
\end{definition}

Although $CD(K,\infty)$ spaces are not doubling, we have bounds on the volume growth of balls, see \cite[Theorem 4.24]{S2006I}.
When we combine this with the fact that $m$ is locally finite we conclude that $m$ is actually boundedly finite.

\begin{proposition}\label{prop:doundedness}
 The measure $m$ in a $CD(K,\infty)$ space $(X,d,m)$ is boundedly finite.
\end{proposition}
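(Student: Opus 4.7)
The plan is to combine two ingredients: the quadratic-exponential volume growth bound in $CD(K,\infty)$ spaces provided by \cite[Theorem 4.24]{S2006I}, and the local finiteness assumption on $m$.

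First, I would extract from Sturm's cited theorem the statement that for any reference point $x_0 \in X$ and radius $r_0 > 0$ with $0 < m(B(x_0, r_0)) < \infty$, there exist constants depending only on $K$, $r_0$ and $m(B(x_0, r_0))$ such that
\[
 m(B(x_0, r)) \le m(B(x_0, r_0)) \exp\left(c_1 r + c_2 K^- r^2\right)
\]
for all $r \ge r_0$. In particular, every ball centered at $x_0$ has finite $m$-measure. The existence of such a reference pair $(x_0, r_0)$ with strictly positive and finite measure on $B(x_0, r_0)$ is supplied by local finiteness together with the (otherwise trivial) case $m \not\equiv 0$; if instead $m \equiv 0$ then the conclusion holds vacuously.

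Next, for an arbitrary bounded set $A \subset X$, I would choose some $y \in A$ and $R > 0$ with $A \subset B(y, R)$. The triangle inequality then gives $A \subset B(x_0, R + d(x_0, y))$, so the volume growth bound from the previous step yields
\[
 m(A) \le m(B(x_0, R + d(x_0, y))) < \infty,
\]
which is what is to be shown.

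The only potentially delicate point is reading off the precise form of the volume growth bound from \cite[Theorem 4.24]{S2006I} and confirming that the constants depend only on the data of one reference ball (rather than, say, a family of them); once this inequality is available, the conclusion is immediate from the triangle inequality and a single enlargement of balls.
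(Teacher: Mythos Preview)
Your proposal is correct and follows exactly the route the paper indicates: the paper does not give a separate proof but simply remarks, immediately before the proposition, that the volume growth bound of \cite[Theorem 4.24]{S2006I} combined with local finiteness of $m$ yields bounded finiteness. Your write-up just spells out this one-line argument in detail.
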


The third generalization of Ricci curvature bounds that we consider here is the measure contraction property, see \cite{O2007}
and also \cite{S2006II}.

\begin{definition}
A space $(X,d,m)$ is said to satisfy the measure contraction property $MCP(K,N)$ (in the sense of Ohta) if for every $x \in X$
and $A \subset X$ (and $A \subset B(x,\pi\sqrt{(N-1)/K})$ if $K > 0$) with $0 < m(A) < \infty$ there exists 
\[
 \pi \in \GeoOpt\left(\delta_x,\frac1{m(A)}m|_A\right) 
\]
so that
\begin{equation}\label{eq:MCPdef}
 dm \ge (e_t)_\#\left(t^N\beta_t(\gamma_0,\gamma_1)m(A)d\pi(\gamma)\right).
\end{equation}
\end{definition}

In the stronger definition of measure contraction property given by Sturm \cite{S2006II} the requirement for contraction is given 
globally with a collection of Markov kernels $(P_t)_{t \in (0,1)}$ from $X^2$ to $X$ so that both of the parameters of the kernel 
can be thought of as the point mass towards which we can contract. In nonbranching metric spaces the two definitions of measure 
contraction property agree because the Markov kernels in these spaces are uniquely determined by the unique geodesics between 
points (up to a set of $m\times m$-measure zero).

In the proofs we will use the following abbreviations:
\[
  C(N,K,D) = \begin{cases}
              e^{\sqrt{(N-1)K^-}D/2},& \text{if } N < \infty,\\
              e^{K^-D^2/8}, & \text{if } N= \infty
             \end{cases}
\]
and 
\[
 P(N,K,D) = \prod_{n=0}^\infty C(N,K,2^{-n}D) = \begin{cases}
             e^{\sqrt{(N-1)K^-}D},& \text{if } N < \infty,\\
             e^{K^-D^2/12}, & \text{if } N= \infty.
            \end{cases}
\]

\section{Construction of good geodesics with bounded density}\label{sec:construction}

In light of the approach taken in \cite{R2011} we know that a local Poincar\'e inequality in a $CD(K,N)$ space
will follow once we have found for any two absolutely continuous measures $\mu_0$ and $\mu_1$, with densities bounded from above,
a geodesic in the Wasserstein space between them so that every measure along the geodesic is absolutely continuous and has a 
suitable upper bound on its density. We have stated the existence of such geodesics in Theorem \ref{thm:goodgeodesic}.

In the case of $CD(K,N)$ spaces in the sense of Lott and Villani in \cite[Lemma 1]{R2011} the needed geodesics were given directly
by the curvature-dimension condition. The upper bound on the density along these geodesics was obtained in a standard way by 
taking the limit as $p \to \infty$ of the $L^p$-norms of the densities of the measures. 
This was possible because the norms to the power $p$ belong to all the displacement convexity
classes $\mathcal{DC}_N$. In $CD(K,N)$ spaces we only have the entropy functionals to work with and 
because of this we have to work a bit more to get the $L^\infty$-bound.
It is interesting to notice that in fact the existence of good geodesics and a local Poincar\'e inequality follow
already from the weak displacement convexity of any of the $L^p$-norms to the power $p$, see Theorem \ref{thm:convexitypoincare}.
Such requirement is weaker than the $CD(0,\infty)$ condition, at least in the sense of Lott and Villani.

To construct the geodesic along which we have the density bound we employ a beautiful idea suggested by K.-T. Sturm.
We first define the geodesic in the midpoint by selecting one of the good measures which belong to the set of all the
possible midpoints along geodesics between the measures $\mu_0$ and $\mu_1$. After this we define in the same manner
the midpoints between the previously selected one and the endpoints $\mu_0$ and $\mu_1$, respectively. Continuing 
this procedure inductively we define the geodesic on a dense set of parameters. A standard completion then 
gives the full geodesic.

There are two things that have to be checked in order to ensure that the measures along the geodesic indeed have the correct density bound.
Firstly, all the midpoints we have selected should have the bound. Secondly, this should imply that the bound is valid at all
measures along the geodesic. This latter point is easy to prove as it follows directly from the lower semicontinuity of suitable 
functionals in the Wasserstein space. The slightly harder part is to find the correct midpoints. 
The general scheme of selecting the midpoints, which we again learned from K.-T. Sturm, uses minimizers of suitable functionals.

The functionals which we minimize here are natural for the problem: they simply measure the excess mass of the measure above
a given density threshold. We want to show that there exists a measure among the midpoints with zero excess mass meaning that the density
of the measure is bounded from above by the threshold. To this aim we first of all prove
that there exists a minimizer of this functional.
In boundedly compact spaces this follows using the direct method in calculus of variations, because the 
functional is lower semicontinuous and the set of midpoints is compact. 
In $CD(K,\infty)$ spaces, which usually are not boundedly compact, we show by hand that there exists a sequence converging to a minimizer.
The claim is then that the functional at the minimizer is 
indeed zero. To prove this we have to use our assumption that we are in a $CD(K,N)$ space. This allows 
us to ``spread'' the excess mass (if there is any) to a larger set when measured with the underlying measure $m$.
This spreading of mass then proves that actually there can be no excess mass at all at the minimum.
Hence the upper bound on the density and the local Poincar\'e inequality follow.

We now gather all the parts that are needed for the proof. The role of each part should be clear from the outline we gave for the proof.

\subsection{Spreading mass using the curvature-dimension conditions}

The spreading of the excess mass will be done using the following proposition, which we could also derive directly from the
Brunn-Minkowski inequality \cite[Proposition 2.1]{S2006II}. As we will later note in Section \ref{sec:poincare}
such spreading can be done in many other spaces besides the $CD(K,N)$ spaces. This leads to another class of metric measure 
spaces with good geodesics and local Poincar\'e inequalities. However, we will now concentrate only on the $CD(K,N)$ spaces of Sturm.
Because any $CD(K,N)$ space is a $CD(K',N)$ space for all $K' \ge K$ in this section it always suffices to consider only the case $K \le 0$.

\begin{proposition}\label{prop:spreading}
 Suppose that $(X,d,m)$ is a $CD(K,N)$ space with $K \in \R$ and $N \in (1,\infty]$.
 Then for any $\mu_0, \mu_1 \in \mathcal{P}^{ac}(X,m)$ with bounded support and with densities $\rho_0$ and $\rho_1$
 bounded from above there exists $\pi \in \GeoOpt(\mu_0,\mu_1)$ so that
 \begin{equation}\label{eq:bigsupport}
  m(\{x \in X ~:~ \rho_{\frac12}(x) > 0\}) \ge 
  \frac1{C(N,K,D) \max\{||\rho_0||_{L^{\infty}(X,m)},||\rho_1||_{L^{\infty}(X,m)}\}},
 \end{equation}
 where $(e_\frac12)_\#\pi = \rho_\frac12 m + \mu_\frac12^s$ with $\mu_\frac12^s \perp m$ and 
 $D$ is an upper bound for the length of $\pi$-almost every $\gamma \in \Geo(X)$.
\end{proposition}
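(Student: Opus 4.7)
The plan is to apply the curvature-dimension inequality at the midpoint $t=1/2$ to the plan supplied by $CD(K,N)$, and then convert the resulting upper bound on the entropy into a lower bound for $m(\{\rho_{1/2}>0\})$ via Jensen's inequality. As observed in the excerpt, it suffices to handle $K\le 0$.

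Take $\pi\in\GeoOpt(\mu_0,\mu_1)$ to be the plan provided by the $CD(K,N)$ definition, write $(e_{1/2})_\#\pi = \rho_{1/2} m + \mu_{1/2}^s$, and set $M=\max\{\|\rho_0\|_{L^\infty},\|\rho_1\|_{L^\infty}\}$ and $A=\{\rho_{1/2}>0\}$. For $N<\infty$ and $K\le 0$ one has $\beta_{1/2}(x_0,x_1)=\cosh(\alpha/2)^{-(N-1)}$ with $\alpha=\sqrt{K^-/(N-1)}\,d(x_0,x_1)$. Using $d(x_0,x_1)\le D$ on $\spt\sigma$ together with $\cosh(s)\le e^s$ gives the pointwise bounds
\[
\beta_{1/2}(x_0,x_1)^{1/N}\ge e^{-\sqrt{(N-1)K^-}D/(2N)},\qquad \rho_i(x_i)^{-1/N}\ge M^{-1/N}.
\]
Plugging these into the right-hand side of \eqref{eq:CD-def} with $N'=N$, $t=1/2$, and rewriting $\sE_N(\mu_{1/2})=-\int\rho_{1/2}^{1-1/N}dm$, I obtain
\[
\int\rho_{1/2}^{1-1/N}\,dm \ge e^{-\sqrt{(N-1)K^-}D/(2N)}M^{-1/N}.
\]
Next, Jensen's inequality applied to the concave function $t\mapsto t^{1-1/N}$ on the probability space $(\mathbf{1}_A\,dm)/m(A)$, together with $\int\rho_{1/2}\,dm\le 1$, gives $\int\rho_{1/2}^{1-1/N}\,dm\le m(A)^{1/N}$. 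Combining these and raising to the $N$-th power yields $m(A)\ge e^{-\sqrt{(N-1)K^-}D/2}M^{-1}$, which is exactly \eqref{eq:bigsupport} with $C(N,K,D)=e^{\sqrt{(N-1)K^-}D/2}$.

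For $N=\infty$ the same strategy works with the Shannon entropy: $\rho_i\le M$ and $\int\rho_i\,dm=1$ give $\sE_\infty(\mu_i)\le\log M$, and $W_2(\mu_0,\mu_1)\le D$ bounds the quadratic correction in \eqref{eq:CD-def2} at $t=1/2$ by $K^-D^2/8$, so that $\sE_\infty(\mu_{1/2})\le\log M+K^-D^2/8$. Jensen applied to the convex function $t\log t$ on $(\mathbf{1}_A\,dm)/m(A)$ yields $\int\rho_{1/2}\log\rho_{1/2}\,dm\ge -\log m(A)$, and combining the two estimates gives $m(A)\ge e^{-K^-D^2/8}M^{-1}=1/(C(\infty,K,D)M)$. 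I do not expect a real obstacle: the only mildly delicate point is that the singular part $\mu_{1/2}^s$ must be harmless, which is automatic because $\sE_N$ and $\sE_\infty$ see only the absolutely continuous part, while Jensen is applied only on $A$ where $\rho_{1/2}>0$.
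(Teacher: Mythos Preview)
Your proposal is correct and follows essentially the same route as the paper: apply the $CD(K,N)$ inequality at $t=1/2$, bound $\beta_{1/2}$ below via $\cosh(\alpha/2)\le e^{\alpha/2}$ (or bound $\sE_\infty(\mu_i)\le\log M$ when $N=\infty$), and then use Jensen's inequality on the set $\{\rho_{1/2}>0\}$ to convert the entropy bound into a lower bound on $m(A)$. Your handling of the singular part is also in line with the paper: for $N<\infty$ only $\int_A\rho_{1/2}\,dm\le 1$ is needed, and for $N=\infty$ the finite entropy bound forces $\mu_{1/2}^s=0$.
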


\begin{proof}
  Write
  \[
   M = \max\{||\rho_0||_{L^{\infty}(X,m)},||\rho_1||_{L^{\infty}(X,m)}\}
  \]
  and
  \[
   E = \{x \in X ~:~ \rho_{\frac12}(x) > 0\}.
  \]

  Let us first prove the claim for $N < \infty$. Let $\pi \in \GeoOpt(\mu_0,\mu_1)$ be a measure satisfying \eqref{eq:CD-def}
  which is concentrated on geodesics with length at most $D$. From \eqref{eq:CD-def} we get
 \begin{align*}
  \sE_N((e_{\frac12})_\#\pi) 
   & \le - \frac12\iint_{X\times X}\left(\frac{\beta_{\frac12}(x_0,x_1)}{\rho_0(x_0)}\right)^{\frac1N}
                                   +\left(\frac{\beta_{\frac12}(x_0,x_1)}{\rho_1(x_1)}\right)^{\frac1N}d\sigma(x_0,x_1)\\
   & \le - \left(e^{\sqrt{(N-1)K^-}D/2}M\right)^{-\frac1N},
 \end{align*}
 because for $K \le 0$ we have
 \begin{align*}
   \beta_{\frac12}(x_0,x_1) & = \left(\frac{\sinh(\frac\alpha2)}{\frac12\sinh \alpha}\right)^{N-1}
                       = \left(\frac{2}{e^{\frac\alpha2} + e^{-\frac\alpha2}}\right)^{N-1} \ge e^{-\frac\alpha2(N-1)}\\
                        &  \ge \exp\left(-\sqrt{\frac{|K|}{N-1}}\frac{D}2(N-1)\right) = e^{-\sqrt{(N-1)|K|}D/2}.
 \end{align*} 
 On the other hand by Jensen's inequality we have
 \[
  \sE_N((e_{\frac12})_\#\pi) = - \int_E \rho_{\frac12}^{1-\frac1N}dm 
                               \ge -m(E)\left(\frac1{m(E)}\int_E \rho_{\frac12} dm\right)^{1-\frac1N}
                                \ge - m(E)^{\frac1N}.
 \]
 Combination of these two inequalities gives \eqref{eq:bigsupport}.

 Let us then prove the case $N = \infty$.
 Let $\pi \in \GeoOpt(\mu_0,\mu_1)$ be a measure satisfying \eqref{eq:CD-def2}
 which is concentrated on geodesics with length at most $D$.
 From \eqref{eq:CD-def2} we get
 \[
  \sE_\infty((e_{\frac12})_\#\pi) \le \frac12\sE_\infty(\mu_0) + \frac12\sE_\infty(\mu_1) + \frac{K^-}2\frac12\left(1-\frac12\right)D^2
                                  \le \log M + \frac{K^-D^2}8.
 \]
  Again, using Jensen's inequality we get
 \[
  \sE_\infty((e_{\frac12})_\#\pi) = \int_E \rho_{\frac12} \log \rho_{\frac12} dm \ge \log \frac1{m(E)}
 \]
 and the combination of these two estimates gives the claim.
\end{proof}

\subsection{The set of intermediate points}

We define for any two measures $\mu_0, \mu_1 \in \mathcal{P}(X)$ with $W_2(\mu_0,\mu_1) < \infty$ the set
of all the intermediate points (with a parameter $\lambda \in (0,1)$) 
 as
\begin{align*}
\mathcal{I}_\lambda(\mu_0,\mu_1) = \{\nu \in \mathcal{P}(X) ~:~ & W_2(\mu_0,\nu) = \lambda W_2(\mu_0,\mu_1) \text{ and }\\
                                                                     & W_2(\mu_1,\nu) = (1-\lambda) W_2(\mu_0,\mu_1)\}. 
\end{align*}
In the case $\lambda = \frac12$ we call the set of intermediate points the set of midpoints and write
\[
 \mathcal{M}(\mu_0,\mu_1) = \mathcal{I}_\frac12(\mu_0,\mu_1).
\]
For all the results in this paper except the measure contraction property it is enough to consider the set of midpoints.

We will use compactness of $\mathcal{I}_\lambda(\mu_0,\mu_1)$ to find the minimizers if the space
$(X,d)$ is boundedly compact. First step in this direction is to show that in general the set
 $\mathcal{I}_\lambda(\mu_0,\mu_1)$ is at least closed in $(\mathcal{P}(X),W_2)$. This fact will also be
needed in the $CD(K,\infty)$ spaces.

\begin{lemma}\label{lma:bdd}
 Assume that $(X,d)$ is a metric space and that $\mu_0, \mu_1 \in \mathcal{P}(X)$ have bounded support.
 Then for all $\lambda \in (0,1)$ the set $\mathcal{I}_\lambda(\mu_0,\mu_1)$ is closed in $(\mathcal{P}(X),W_2)$.
\end{lemma}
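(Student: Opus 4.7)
The plan is to use the triangle inequality for $W_2$, which is the only nontrivial ingredient. The assumption of bounded support on $\mu_0,\mu_1$ ensures $W_2(\mu_0,\mu_1)<\infty$, so the two defining equalities of $\mathcal{I}_\lambda(\mu_0,\mu_1)$ make sense, and any $\nu\in\mathcal{I}_\lambda(\mu_0,\mu_1)$ automatically satisfies $W_2(\mu_i,\nu)<\infty$ for $i=0,1$.

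First I would pick a sequence $\nu_n\in\mathcal{I}_\lambda(\mu_0,\mu_1)$ with $\nu_n\to\nu$ in $(\mathcal{P}(X),W_2)$, meaning $W_2(\nu_n,\nu)\to 0$. In particular $W_2(\nu_n,\nu)<\infty$ for large $n$, and hence by the triangle inequality
\[
 W_2(\mu_i,\nu)\le W_2(\mu_i,\nu_n)+W_2(\nu_n,\nu)=\lambda_i W_2(\mu_0,\mu_1)+W_2(\nu_n,\nu)<\infty,
\]
where $\lambda_0=\lambda$ and $\lambda_1=1-\lambda$. This shows that $\nu$ has finite $W_2$ distance to both $\mu_0$ and $\mu_1$, so the triangle inequality can be applied in both directions.

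Next, applying the reverse triangle inequality $|W_2(\mu_i,\nu)-W_2(\mu_i,\nu_n)|\le W_2(\nu_n,\nu)$ and letting $n\to\infty$ gives
\[
 W_2(\mu_i,\nu)=\lim_{n\to\infty}W_2(\mu_i,\nu_n)=\lambda_i W_2(\mu_0,\mu_1),
\]
which is exactly the condition defining $\mathcal{I}_\lambda(\mu_0,\mu_1)$. Hence $\nu\in\mathcal{I}_\lambda(\mu_0,\mu_1)$, proving closedness.

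There is really no hard step here; the only thing to be mindful of is the caveat already flagged in the paper that $W_2$ is not a genuine distance when masses can be sent to infinity. The bounded-support hypothesis on $\mu_0,\mu_1$ together with the definition of $\mathcal{I}_\lambda$ keeps every $W_2$ quantity finite, so the standard triangle-inequality argument goes through without modification.
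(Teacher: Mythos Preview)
Your proof is correct and follows essentially the same approach as the paper's: both take a $W_2$-convergent sequence in $\mathcal{I}_\lambda(\mu_0,\mu_1)$ and use the (reverse) triangle inequality for $W_2$ to pass the two defining equalities to the limit. Your version is slightly more careful in explicitly checking finiteness of the relevant $W_2$ distances before invoking the triangle inequality, but the argument is the same.
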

\begin{proof}
 Take any sequence
 $(\nu_n)_{n=1}^\infty \subset \mathcal{I}_\lambda(\mu_0,\mu_1)$ such that
 \[
  \nu_n \to \nu \in \mathcal{P}(X)\qquad \text{in the }W_2\text{-distance as }n \to \infty.
 \]
 Then
 \[
  \max\{|W_2(\mu_0,\nu) -W_2(\mu_0, \nu_n)|,|W_2(\mu_1,\nu) -W_2(\mu_1, \nu_n)|\} \le W_2(\nu, \nu_n) \to 0
 \]
 as $n \to \infty$. So, 
 \[
  W_2(\mu_0,\nu) = \lambda  W_2(\mu_0,\mu_1) \qquad \text{and} \qquad W_2(\mu_1,\nu) = (1-\lambda) W_2(\mu_0,\mu_1)  
 \]
 and thus $\nu \in \mathcal{I}_\lambda(\mu_0,\mu_1)$. 
\end{proof}

To get the compactness of $\mathcal{I}_\lambda(\mu_0,\mu_1)$ we need to assume that the space is boundedly compact.

\begin{lemma}\label{lma:cmpt}
 Assume that $(X,d)$ is a boundedly compact metric space and that $\mu_0, \mu_1 \in \mathcal{P}(X)$ have bounded support.
 Then for all $\lambda \in (0,1)$ the set $\mathcal{I}_\lambda(\mu_0,\mu_1)$ is compact in $(\mathcal{P}(X),W_2)$.
\end{lemma}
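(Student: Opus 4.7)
Since Lemma \ref{lma:bdd} already provides $W_2$-closedness of $\mathcal{I}_\lambda(\mu_0,\mu_1)$, the plan is to upgrade this to compactness by exhibiting a single compact set $K\subset X$ that contains the support of every $\nu\in\mathcal{I}_\lambda(\mu_0,\mu_1)$. I fix $x_0\in X$ and $R>0$ with $\spt\mu_0\cup\spt\mu_1\subset B(x_0,R)$; then $K:=\overline{B(x_0,3R)}$ is compact by the bounded compactness of $(X,d)$.

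Given $\nu\in\mathcal{I}_\lambda(\mu_0,\mu_1)$, I would pick optimal couplings $\sigma_1$ of $\mu_0$ with $\nu$ and $\sigma_2$ of $\nu$ with $\mu_1$ for the cost $d^2$, disintegrate both with respect to their common marginal $\nu$, and glue along $\nu$ to produce $\sigma\in\mathcal{P}(X\times X\times X)$ with marginals $\mu_0,\nu,\mu_1$. The chain
\[
 W_2(\mu_0,\mu_1)\le \|d(x,z)\|_{L^2(\sigma)}\le \|d(x,y)\|_{L^2(\sigma)}+\|d(y,z)\|_{L^2(\sigma)} = W_2(\mu_0,\nu)+W_2(\nu,\mu_1),
\]
coming from the triangle inequality in $X$ followed by Minkowski in $L^2(\sigma)$, must consist of equalities since $\nu\in\mathcal{I}_\lambda(\mu_0,\mu_1)$. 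Saturation of the triangle inequality yields $d(x,z)=d(x,y)+d(y,z)$ for $\sigma$-a.e.\ $(x,y,z)$, while saturation of Minkowski forces proportionality of $d(x,y)$ and $d(y,z)$; matching the resulting ratio against $W_2(\mu_0,\nu)=\lambda W_2(\mu_0,\mu_1)$ gives $d(x,y)=\lambda d(x,z)$ $\sigma$-a.e.

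Using this pointwise relation, for $\sigma$-a.e.\ $(x,y,z)$ one has $x\in B(x_0,R)$ and $d(x,z)\le 2R$, hence $d(x_0,y)\le R+\lambda\cdot 2R\le 3R$, so $\spt\nu\subset K$. Therefore $\mathcal{I}_\lambda(\mu_0,\mu_1)\subset\mathcal{P}(K)$, and on the compact metric space $K$ the $W_2$-topology coincides with the weak topology (uniform second-moment bound by $\diam(K)^2$), so Prokhorov's theorem yields $W_2$-sequential compactness of $\mathcal{P}(K)$. Combined with closedness from Lemma \ref{lma:bdd} this gives compactness of $\mathcal{I}_\lambda(\mu_0,\mu_1)$. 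The main delicate point is the gluing of $\sigma_1$ and $\sigma_2$ through $\nu$ by disintegration; the subsequent extraction of the pointwise distance relations and the topological equivalence on $\mathcal{P}(K)$ are routine.
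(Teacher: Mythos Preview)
Your proof is correct and follows the same overall strategy as the paper: show that every $\nu\in\mathcal{I}_\lambda(\mu_0,\mu_1)$ is supported in a single fixed compact set, deduce relative $W_2$-compactness of $\mathcal{I}_\lambda(\mu_0,\mu_1)$ inside $\mathcal{P}(K)$, and then invoke the closedness from Lemma~\ref{lma:bdd}. The paper's proof is extremely terse --- it simply asserts total boundedness of $\mathcal{I}_\lambda(\mu_0,\mu_1)$ from the boundedness of $\spt\mu_0\cup\spt\mu_1$ --- and implicitly relies on the fact, recorded in Section~\ref{sec:definitions}, that intermediate points arise as $(e_\lambda)_\#\pi$ for some $\pi\in\GeoOpt(\mu_0,\mu_1)$, which forces the support into the ball $\overline{B(x_0,3R)}$.

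Your argument makes this step self-contained: rather than quoting the lifting to $\mathcal{P}(\Geo(X))$, you glue two optimal plans along $\nu$ and read off the pointwise relation $d(x,y)=\lambda\,d(x,z)$ from the equality cases of the triangle and Minkowski inequalities. This is slightly more work but avoids appealing to the geodesic-lifting result and is a standard, clean way to see that $\mathcal{I}_\lambda$ sits inside $\mathcal{P}(K)$. One small point: your Minkowski equality step tacitly assumes $W_2(\mu_0,\mu_1)>0$; when $W_2(\mu_0,\mu_1)=0$ the set $\mathcal{I}_\lambda(\mu_0,\mu_1)$ is the singleton $\{\mu_0\}$ and there is nothing to prove.
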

\begin{proof}
 Because the measures $\mu_0$ and $\mu_1$ have bounded support and $(X,d)$ is boundedly compact,
 we can cover the set $\mathcal{I}_\lambda(\mu_0,\mu_1)$ with a finite number of
 balls with arbitrarily small radius. Therefore $\mathcal{I}_\lambda(\mu_0,\mu_1)$ is relatively 
 compact in $\mathcal{P}(X)$ and hence by Lemma \ref{lma:bdd} it is compact.
\end{proof}

 An easy consequence of the compactness of the set of intermediate points is the compactness of geodesics between the corresponding
 measures. This will be used in the proof of the measure contraction property. Recall that in Section \ref{sec:definitions} we 
 defined the distance $\mathcal{W}_2$ in the space $\mathcal{P}(\Geo(X))$ as
 \[
  \mathcal{W}_2(\pi_1,\pi_2) = \sup_{t \in [0,1]} W_2((e_t)_\#\pi_1,(e_t)_\#\pi_2).
 \]

\begin{lemma}\label{lma:cmpt_geod}
 Assume that $(X,d)$ is a boundedly compact metric space and that $\mu_0, \mu_1 \in \mathcal{P}(X)$ have bounded support.
 Then the set $\GeoOpt(\mu_0,\mu_1)$ is compact in the space $(\mathcal{P}(\Geo(X)),\mathcal{W}_2)$.
\end{lemma}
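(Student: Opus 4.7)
The plan is to reduce the statement to compactness of a Wasserstein space over a compact metric space, the metric being the uniform distance $d^*$ on $\Geo(X)$. First I would locate a compact subset $G\subset\Geo(X)$ on which every $\pi\in\GeoOpt(\mu_0,\mu_1)$ is concentrated. Since $(X,d)$ is boundedly compact and $\mu_0,\mu_1$ have bounded support, the sets $\spt\mu_0$ and $\spt\mu_1$ are compact; writing $D=\diam(\spt\mu_0\cup\spt\mu_1)$, every $\gamma\in\Geo(X)$ with $\gamma_0\in\spt\mu_0$ and $\gamma_1\in\spt\mu_1$ is $D$-Lipschitz and takes values in the compact set $\overline{B}(\spt\mu_0,D)$. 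By Arzel\`a--Ascoli the collection $G$ of such geodesics is relatively compact in $C([0,1],X)$ with the sup metric, and the defining identity $d(\gamma_s,\gamma_t)=|s-t|d(\gamma_0,\gamma_1)$ passes to uniform limits, so $G$ is closed and hence compact in $(\Geo(X),d^*)$. Any $\pi\in\GeoOpt(\mu_0,\mu_1)$ is concentrated on $G$ because $(e_0)_\#\pi=\mu_0$ and $(e_1)_\#\pi=\mu_1$.

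Next, given a sequence $(\pi_n)\subset\GeoOpt(\mu_0,\mu_1)$, view it as a sequence in $\mathcal{P}(G)$. Since $G$ is a compact metric space, $\mathcal{P}(G)$ is compact for the weak topology, which coincides with the topology of the Wasserstein distance $W_2^{d^*}$ on $\mathcal{P}(G)$ induced by $d^*$. Extract a subsequence $\pi_{n_k}$ converging to some $\pi\in\mathcal{P}(G)$ in $W_2^{d^*}$. A single coupling estimate then transfers this to $\mathcal{W}_2$-convergence: for any coupling $\sigma$ between two measures on $\Geo(X)$ and any $t\in[0,1]$, the push-forward $(e_t,e_t)_\#\sigma$ couples the corresponding $(e_t)_\#$ marginals, and since $d(\gamma_t,\tilde\gamma_t)\le d^*(\gamma,\tilde\gamma)$ one obtains $\mathcal{W}_2\le W_2^{d^*}$. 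Hence $\pi_{n_k}\to\pi$ in $\mathcal{W}_2$ as well.

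Finally I would check that $\pi\in\GeoOpt(\mu_0,\mu_1)$. From $\mathcal{W}_2$-convergence we get $(e_t)_\#\pi_{n_k}\to(e_t)_\#\pi$ in $W_2$ for every $t\in[0,1]$. Passing to the limit in the identities $(e_0)_\#\pi_{n_k}=\mu_0$, $(e_1)_\#\pi_{n_k}=\mu_1$, and $W_2((e_s)_\#\pi_{n_k},(e_t)_\#\pi_{n_k})=|s-t|W_2(\mu_0,\mu_1)$ shows that $t\mapsto(e_t)_\#\pi$ is a geodesic from $\mu_0$ to $\mu_1$ in $(\mathcal{P}(X),W_2)$, so $\pi\in\GeoOpt(\mu_0,\mu_1)$. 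The only delicate point is matching the two natural metrics on $\mathcal{P}(\Geo(X))$: the Wasserstein metric $W_2^{d^*}$ built from $d^*$ (needed to invoke compactness of $\mathcal{P}(G)$) and the metric $\mathcal{W}_2$ appearing in the statement. This is resolved by the one-line coupling inequality $\mathcal{W}_2\le W_2^{d^*}$, which is really the heart of the argument; everything else is soft compactness.
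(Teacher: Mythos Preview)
Your proof is correct but follows a genuinely different route from the paper. The paper's argument stays at the level of time-marginals: it extracts, via Lemma~\ref{lma:cmpt}, subsequences along which $(e_\lambda)_\#\pi_n$ converges for each dyadic $\lambda$, then takes a diagonal subsequence to get convergence of all dyadic marginals simultaneously, and finally asserts that this yields a limit $\pi\in\GeoOpt(\mu_0,\mu_1)$ with $\pi_n\to\pi$ in $\mathcal{W}_2$. You instead work directly at the path level: Arzel\`a--Ascoli gives a compact $G\subset(\Geo(X),d^*)$ supporting every $\pi_n$, compactness of $\mathcal{P}(G)$ in $W_2^{d^*}$ provides a limit, and the coupling inequality $\mathcal{W}_2\le W_2^{d^*}$ transfers this to the required topology. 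Your approach is arguably cleaner in that it produces the limiting measure on $\Geo(X)$ explicitly and even yields the stronger $W_2^{d^*}$-convergence, whereas the paper's argument leaves the step of reassembling the limiting marginals into a single $\pi\in\mathcal{P}(\Geo(X))$ somewhat implicit. The paper's version, on the other hand, is internal to its own development (reusing Lemma~\ref{lma:cmpt}) and avoids appealing to Arzel\`a--Ascoli or to general facts about Wasserstein spaces over compact bases.
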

\begin{proof}
 Let $(\pi_n)_{n=1}^\infty$ be a sequence in $\GeoOpt(\mu_0,\mu_1)$. Then by Lemma \ref{lma:cmpt} there exists a subsequence
 (which we still write as $(\pi_n)_{n=1}^\infty$) for which $((e_\frac12)_\#\pi_n)_{n=1}^\infty$ converges to a measure in
 $\mathcal{M}(\mu_0,\mu_1)$. Going into a further subsequence gives the convergence of also $((e_\frac14)_\#\pi_n)_{n=1}^\infty$
 and $((e_\frac34)_\#\pi_n)_{n=1}^\infty$ to measures in $\mathcal{I}_\frac14(\mu_0,\mu_1)$ and 
 $\mathcal{I}_\frac34(\mu_0,\mu_1)$ respectively. Taking further subsequences and finally a diagonal sequence gives convergence
 of $((e_\lambda)_\#\pi_n)_{n=1}^\infty$ for a dense set of parameters $\lambda \in [0,1]$. This gives a measure
 $\pi \in \GeoOpt(\mu_0,\mu_1)$ to which $(\pi_n)_{n=1}^\infty$ converges in the $\mathcal{W}_2$-distance.
\end{proof}

 The next lemma gives the needed convexity-type properties of the set $\mathcal{I}_\lambda(\mu_0,\mu_1)$.

\begin{lemma}\label{lma:combined}
 Suppose $\mu_0, \mu_1 \in \mathcal{P}(X)$ with $W_2(\mu_0,\mu_1)<\infty$. Then
 for any $\pi \in \GeoOpt(\mu_0,\mu_1)$ and any Borel function $f \colon \Geo(X) \to [0,1]$ with $c = (f\pi)(\Geo(X)) \in (0,1)$ we have
 \[
  (e_{\lambda})_\#\left((1-f)\pi\right) + c\nu \in \mathcal{I}_\lambda(\mu_0, \mu_1)
 \]
 with every 
 \[
  \nu \in \mathcal{I}_\lambda\left(\frac1{c} (e_{0})_\#\left(f\pi\right),
      \frac1{c} (e_{1})_\#\left(f\pi\right)\right).
 \]             
\end{lemma}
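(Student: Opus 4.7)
The plan is to exhibit an explicit admissible coupling between $\mu_0$ and the candidate measure $\eta := (e_\lambda)_\#\bigl((1-f)\pi\bigr) + c\,\nu$ (which is a probability measure since it has total mass $(1-c) + c = 1$) whose cost is at most $\lambda^2 W_2^2(\mu_0,\mu_1)$, and, symmetrically, a coupling between $\mu_1$ and $\eta$ of cost at most $(1-\lambda)^2 W_2^2(\mu_0,\mu_1)$. Because of the triangle inequality $W_2(\mu_0,\mu_1)\le W_2(\mu_0,\eta)+W_2(\eta,\mu_1)$, these two upper bounds must in fact be equalities, which is precisely the condition defining $\eta \in \mathcal{I}_\lambda(\mu_0,\mu_1)$.

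For the coupling to $\mu_0$, let $\sigma_0 \in \PP(X \times X)$ be an optimal transport plan between $\tfrac1c (e_0)_\#(f\pi)$ and $\nu$, and set
\[
\Sigma_0 := (e_0,e_\lambda)_\#\bigl((1-f)\pi\bigr) + c\,\sigma_0.
\]
The first marginal of $\Sigma_0$ is $(e_0)_\#((1-f)\pi) + (e_0)_\#(f\pi) = \mu_0$ and its second marginal is $(e_\lambda)_\#((1-f)\pi) + c\,\nu = \eta$, so $\Sigma_0$ is admissible. The cost of the first summand equals $\lambda^2\int d(\gamma_0,\gamma_1)^2(1-f)(\gamma)\,d\pi(\gamma)$ because every $\gamma$ is parametrized with constant speed. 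The cost of $c\,\sigma_0$ equals $c\,W_2^2\bigl(\tfrac1c(e_0)_\#(f\pi),\nu\bigr) = c\lambda^2 W_2^2\bigl(\tfrac1c(e_0)_\#(f\pi),\tfrac1c(e_1)_\#(f\pi)\bigr)$, since $\nu$ is an intermediate point at parameter $\lambda$; bounding this last $W_2^2$ from above by the cost of the admissible coupling $\tfrac1c(e_0,e_1)_\#(f\pi)$ gives $\lambda^2\int d(\gamma_0,\gamma_1)^2 f(\gamma)\,d\pi(\gamma)$. Adding the two contributions,
\[
W_2^2(\mu_0,\eta) \le \lambda^2 \int d(\gamma_0,\gamma_1)^2\,d\pi(\gamma) = \lambda^2 W_2^2(\mu_0,\mu_1),
\]
where the last equality uses that $(e_0,e_1)_\#\pi$ is optimal, a consequence of $\pi \in \GeoOpt(\mu_0,\mu_1)$.

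An entirely symmetric construction, using $(e_1,e_\lambda)_\#((1-f)\pi)$ together with an optimal coupling between $\tfrac1c(e_1)_\#(f\pi)$ and $\nu$, yields $W_2(\mu_1,\eta)\le (1-\lambda) W_2(\mu_0,\mu_1)$, and the triangle inequality then pinches both bounds to equalities. The argument is essentially algebraic bookkeeping of marginals, so no real obstacle is expected; the one point that must be matched carefully is that the factor $\lambda^2$ coming from the constant-speed parametrization of the $(1-f)$-piece aligns with the factor $\lambda^2$ coming from $\nu$ being a $\lambda$-intermediate point of the rescaled $f$-piece, so that the two contributions combine cleanly into $\lambda^2 W_2^2(\mu_0,\mu_1)$.
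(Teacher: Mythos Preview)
Your proof is correct and follows essentially the same route as the paper: both arguments establish the two upper bounds $W_2(\mu_0,\eta)\le\lambda W_2(\mu_0,\mu_1)$ and $W_2(\mu_1,\eta)\le(1-\lambda)W_2(\mu_0,\mu_1)$ and then invoke the triangle inequality to force equalities. The only presentational difference is that the paper packages the upper-bound step as an application of the joint convexity of $W_2^2$ (together with the fact that restrictions of optimal geodesic plans remain optimal, which it uses to turn some intermediate inequalities into equalities), whereas you unpack this by writing down the explicit admissible coupling $\Sigma_0$ and work only with inequalities throughout; your version is slightly more self-contained since it does not need to invoke the restriction lemma or cite joint convexity as a black box.
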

\begin{proof}
 Since $W_2^2$ is easily seen to be jointly convex, we have
 \begin{align*}
  W_2^2\big((e_{\lambda})_\#&\left((1-f)\pi\right) + c\nu, (e_{0})_\#\pi\big)\\
     = ~&  W_2^2\left((e_{\lambda})_\#\left((1-f)\pi\right) + c\nu, (e_{0})_\#\left((1-f)\pi\right) + (e_{0})_\#\left(f\pi\right)\right) \\
     \le ~& (1-c)W_2^2\left(\frac1{1-c}(e_{\lambda})_\#\left((1-f)\pi\right), \frac1{1-c}(e_{0})_\#\left((1-f)\pi\right)\right)\\
         & + cW_2^2 \left(\nu, \frac1c(e_{0})_\#\left(f\pi\right)\right) \\
     = ~& (1-c)\lambda^2W_2^2\left(\frac1{1-c}(e_{1})_\#\left((1-f)\pi\right), \frac1{1-c}(e_{0})_\#\left((1-f)\pi\right)\right)\\
         & + c\lambda^2W_2^2 \left(\frac1c(e_{1})_\#\left(f\pi\right), \frac1c(e_{0})_\#\left(f\pi\right)\right) \\
     = ~& \lambda^2W_2^2 \left(\frac1c(e_{1})_\#\pi, \frac1c(e_{0})_\#\pi\right).
 \end{align*}
 Similarly,
 \[
  W_2\left((e_{\lambda})_\#\left((1-f)\pi\right) + c\nu, (e_{1})_\#\pi\right) \le (1-\lambda)W_2 \left(\frac1c(e_{1})_\#\pi, \frac1c(e_{0})_\#\pi\right)
 \]
 and hence the claim follows.
\end{proof}
%

\subsection{The excess mass functional}

We define for all thresholds $C \ge 0$ the excess mass functional $\mathcal{F}_C \colon \mathcal{P}(X) \to [0,1]$ as
\[
 \mathcal{F}_C(\mu) = ||(\rho-C)^+||_{L^1(X,m)} + \mu^s(X),
\]
where $\mu = \rho m + \mu^s$ with $\mu^s \perp m$, and $a^+ = \max\{0,a\}$. The crucial property of this functional is that it is
lower semicontinuous in the Wasserstein space $(\mathcal{P}(X),W_2)$.

\begin{lemma}\label{lma:lsc}
 Let $(X,d)$ be a bounded metric space with a finite measure $m$.
 Then for any $C \ge 0$ the functional $\mathcal{F}_C$ is lower semicontinuous in $(\mathcal{P}(X),W_2)$.
\end{lemma}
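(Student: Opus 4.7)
The plan is to write $\mathcal{F}_C$ as a pointwise supremum of $W_2$-continuous functionals on $\mathcal{P}(X)$; lower semicontinuity will then come for free. Concretely, I aim to establish the dual representation
\[
 \mathcal{F}_C(\mu) = \sup_{\varphi \in C(X),\, 0 \le \varphi \le 1}\left[ \int_X \varphi\, d\mu - C\int_X \varphi\, dm\right].
\]

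First I would check the easy inequality $\ge$: decomposing $\mu = \rho m + \mu^s$, for every Borel $\varphi$ with $0 \le \varphi \le 1$ one has
\[
 \int \varphi\, d\mu - C\int \varphi\, dm = \int \varphi(\rho - C)\, dm + \int \varphi\, d\mu^s \le \int (\rho-C)^+ dm + \mu^s(X) = \mathcal{F}_C(\mu),
\]
which a fortiori holds for continuous $\varphi$. For the reverse inequality, let $N$ be a Borel set with $m(N) = 0$ carrying the singular part of $\mu$ and set $A = N \cup \{\rho > C\}$. A direct computation shows that the Borel indicator $\chi_A$ saturates the expression above. Then Lusin's theorem, applied to the two finite Radon measures $m$ and $\mu$ on the complete separable space $X$, furnishes continuous functions $\varphi_k$ with $0 \le \varphi_k \le 1$ converging to $\chi_A$ simultaneously in $L^1(m)$ and $L^1(\mu)$; dominated convergence then upgrades this to $\int \varphi_k\, d\mu - C\int \varphi_k\, dm \to \mathcal{F}_C(\mu)$, completing the supremum representation.

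With the representation in hand, the conclusion is immediate. Since $X$ is bounded, the $W_2$-distance metrizes weak convergence on $\mathcal{P}(X)$, so for every fixed $\varphi \in C(X)$ the affine functional $\mu \mapsto \int \varphi\, d\mu - C\int \varphi\, dm$ is $W_2$-continuous; a pointwise supremum of continuous functions is lower semicontinuous, giving the lemma.

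The main obstacle is precisely the Borel-to-continuous passage in the dual representation: one has to approximate $\chi_A$ in $L^1$ with respect to \emph{both} $m$ and $\mu$ at once. This is routine once one invokes the Radon regularity of the measures, but it is the only nontrivial input and replaces the direct appeal one might otherwise make to Ioffe-type lower semicontinuity results for convex integral functionals with linear recession.
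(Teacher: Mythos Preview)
Your proposal is correct and follows essentially the same route as the paper: both establish the same dual representation $\mathcal{F}_C(\mu)=\sup_{0\le g\le 1,\,g\in C(X)}\big(\int g\,d\mu-C\int g\,dm\big)$ and then read off lower semicontinuity from the $W_2$-continuity of each linear functional. The only cosmetic difference is in the hard inequality of the duality: the paper builds an explicit Urysohn-type function between compact and open approximants of the singular carrier and of $\{\rho\ge C\}$, whereas you invoke Lusin's theorem (implicitly with respect to $m+\mu$) to approximate $\chi_A$---both amount to the Radon regularity of $m$ and $\mu$.
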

\begin{proof}
 For locally compact spaces a proof of this fact can be found for example from \cite[Theorem 30.6]{V2009}.
 For spaces which are not locally compact the lower semicontinuity can be proved via a duality formula similar
 to \cite[Lemma 9.4.4]{AGS2008}. Namely, $\mathcal{F}_C$ can be represented as the supremum of continuous functionals:
 \begin{equation}\label{eq:Frep}
  \mathcal{F}_C(\mu) =  \sup \left\{\int_Xg(x)d\mu(x) - C \int_Xg(x)dm(x) ~:~g \in C(X),~ 0 \le g \le 1\right\}.
 \end{equation}
 Therefore it is lower semicontinuous.

 Let us verify \eqref{eq:Frep}. Inequality in one direction is obvious since
 \[
  \int_Xgd\mu - C \int_Xgdm = \int_X(\rho-C)g dm + \int_Xg d\mu^s \le \mathcal{F}_C(\mu).
 \]

 The other direction follows from the fact that the probability measures are Radon.
 Take $\epsilon > 0$. To handle the singular part of $\mu$ take compact $E_1 \subset X$ such that
 \[
  \mu^s(E_1) \ge \mu^s(X) -\epsilon \quad\text{ and }\quad m(E_1)=0.  
 \]
 Take also an open set $O_1 \subset X$ 
 with $E_1 \subset O_1$ and $m(O_1)\le\epsilon$.
 To deal with the absolutely continous part take a compact set
 \[
  E_2 \subset \{x \in X~:~\rho(x) \ge C\}
 \]
 with
 \[
  \mu(E_2) \ge \mu(\{x \in X~:~\rho(x) \ge C\}) - \epsilon,
 \]
 and an open set $O_2 \subset X$ with $E_2 \subset O_2$ and $m(O_2 \setminus E_2)\le \epsilon$.

 Now let $g \in C(X)$ be such that $0 \le g(x) \le 1$ for all $x \in X$, $g = 1$ in $E_1 \cup E_2$
 and $g = 0$ outside $O_1 \cup O_2$. Then
 \begin{align*}
  \int_Xgd\mu & - C \int_Xgdm = \int_X(\rho-C)g dm + \int_Xg d\mu^s \\
     & \ge \int_{E_1\cup E_2}(\rho-C) dm + \int_{(O_1\cup O_2) \setminus (E_1 \cup E_2)}(\rho-C)gdm + \mu^s(E_1\cup E_2) \\
     & \ge \mathcal{F}_C(\mu) -2\epsilon - Cm\left((O_1\cup O_2) \setminus (E_1 \cup E_2)\right) \ge \mathcal{F}_C(\mu) - 2(C+1)\epsilon
 \end{align*}
 proving \eqref{eq:Frep}. 
\end{proof}

Combining Lemma \ref{lma:cmpt} with Lemma \ref{lma:lsc} we get the existence of minimizers of $\mathcal{F}_C$
in $\mathcal{I}_\lambda(\mu_0,\mu_1)$ in boundedly compact metric spaces.

\begin{proposition}\label{prop:existence}
 Assume that $(X,d)$ is a boundedly compact metric space with a locally finite measure $m$ and that 
 $\mu_0, \mu_1 \in \mathcal{P}(X)$ have bounded support. Then
 for all $C \ge 0$ and $\lambda \in (0,1)$ there exists a minimizer of $\mathcal{F}_C$ in $\mathcal{I}_\lambda(\mu_0,\mu_1)$.
\end{proposition}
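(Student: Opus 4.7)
The plan is to apply the direct method of the calculus of variations: combine the compactness of $\mathcal{I}_\lambda(\mu_0,\mu_1)$ from Lemma \ref{lma:cmpt} with the lower semicontinuity of $\mathcal{F}_C$ from Lemma \ref{lma:lsc}. The only subtlety is that Lemma \ref{lma:lsc} is phrased for a bounded ambient space carrying a finite reference measure, whereas here we only assume bounded compactness of $(X,d)$ and local finiteness of $m$. So the first task is to localize the problem to a sufficiently large bounded set.

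Let $R = \diam(\spt\mu_0 \cup \spt\mu_1)$ and $B = \{x \in X : \dist(x,\spt\mu_0) \le R\}$, which is closed and bounded. For any $\nu \in \mathcal{I}_\lambda(\mu_0,\mu_1)$, the standard Wasserstein geometry recalled in Section \ref{sec:definitions} lets us write $\nu = (e_\lambda)_\#\pi$ for some $\pi \in \GeoOpt(\mu_0,\mu_1)$, and $\pi$-almost every $\gamma \in \Geo(X)$ satisfies $\gamma_0 \in \spt\mu_0$ and $\gamma_1 \in \spt\mu_1$. Therefore $d(\gamma_\lambda,\gamma_0) = \lambda\, d(\gamma_0,\gamma_1) \le R$, so pushing forward by $e_\lambda$ gives $\spt\nu \subset B$. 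Bounded compactness of $(X,d)$ makes $B$ compact, and then local finiteness of $m$ combined with a finite covering of $B$ by balls of finite $m$-measure forces $m(B) < \infty$. Thus $(B, d, m|_B)$ is a bounded metric space with finite measure, and every element of $\mathcal{I}_\lambda(\mu_0,\mu_1)$ lies in $\mathcal{P}(B)$.

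Now pick a minimizing sequence $(\nu_n)_{n=1}^\infty \subset \mathcal{I}_\lambda(\mu_0,\mu_1)$ for $\mathcal{F}_C$. Lemma \ref{lma:cmpt} produces a $W_2$-convergent subsequence with limit $\nu \in \mathcal{I}_\lambda(\mu_0,\mu_1)$, and Lemma \ref{lma:lsc} applied inside the bounded space $B$ (on which all the $\nu_n$ and $\nu$ are concentrated) yields
\[
\mathcal{F}_C(\nu) \le \liminf_{n\to\infty}\mathcal{F}_C(\nu_n) = \inf_{\mathcal{I}_\lambda(\mu_0,\mu_1)}\mathcal{F}_C,
\]
exhibiting $\nu$ as the desired minimizer. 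The only non-routine step is the support-containment verification $\spt\nu \subset B$: this is what legitimately reduces the problem to a bounded ambient space so that Lemma \ref{lma:lsc} can be invoked in its stated form.
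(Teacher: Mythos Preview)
Your proof is correct and follows essentially the same approach as the paper: take a minimizing sequence, extract a convergent subsequence via the compactness Lemma~\ref{lma:cmpt}, and conclude using the lower semicontinuity Lemma~\ref{lma:lsc}. The paper's own proof is slightly terser in that it invokes Lemma~\ref{lma:lsc} directly without spelling out the localization to a bounded set of finite measure; your explicit verification that every $\nu \in \mathcal{I}_\lambda(\mu_0,\mu_1)$ is supported in a fixed compact set $B$ with $m(B)<\infty$ makes that implicit step precise.
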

\begin{proof}
 Take a sequence $(\nu_n)_{n=0}^\infty \subset \mathcal{I}_\lambda(\mu_0,\mu_1)$ so that
 \[
  \mathcal{F}_C(\nu_n) \to \inf\{\mathcal{F}_C(\omega) ~:~ \omega \in \mathcal{I}_\lambda(\mu_0,\mu_1)\}.
 \]
 Because by Lemma \ref{lma:cmpt} the set $\mathcal{I}_\lambda(\mu_0,\mu_1)$ is compact, we may assume that
 $\nu_n \to \nu \in \mathcal{I}_\lambda(\mu_0,\mu_1)$ in the $W_2$-distance.
 By Lemma \ref{lma:lsc}
 \[
  \mathcal{F}_C(\nu) \le \liminf_{n \to \infty}\mathcal{F}_C(\nu_n)
 \]
 and so we have the existence of the minimizer.
\end{proof}

\subsection{Existence of minimizers in $CD(K,\infty)$}

In the genuinely infinite dimensional case the set $\mathcal{I}_\lambda(\mu_0,\mu_1)$ does not have to be compact.
Therefore we will need to prove the existence of the needed minimizers by hand. Because we will need the existence of 
minimizers only for the set of midpoints, we will not formulate the results for other sets of intermediate points.

We will use the following lemma to prove the existence on minimizers.
The idea behind the lemma is very simple: we redistribute the possible excess mass using the assumption that we
are in a $CD(K,\infty)$ space and observe that the part of the redistributed measure
which has large density must necessarily be small. 

\begin{lemma}\label{lma:smallF}
 Assume that $(X,d)$ is a $CD(K,\infty)$ space and that 
 $\mu_0, \mu_1 \in \mathcal{P}^{ac}(X)$ with $\mu_0 = \rho_0 m$, $\mu_1 = \rho_1 m$ and
 $D = \diam(\spt \mu_0 \cup \spt \mu_1) < \infty$. Then
 for all 
 \[
  C \ge e^{K^-D^2/8} \max\{||\rho_0||_{L^{\infty}(X,m)},||\rho_1||_{L^{\infty}(X,m)}\}
 \]
 there exists $(H_\epsilon)_{\epsilon > 0} \subset \R$ with the following property.
 For each $\nu \in \mathcal{M}(\mu_0,\mu_1)$
 there exists $\tilde\nu \in \mathcal{M}(\mu_0,\mu_1)$ with
 \[
  \mathcal{F}_C(\tilde\nu) \le \mathcal{F}_C(\nu)
 \]
 and
 \[
  \mathcal{F}_{H_\epsilon}(\tilde\nu) \le \epsilon
 \] 
 for every $\epsilon > 0$.
\end{lemma}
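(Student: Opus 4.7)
The idea is to split $\nu$ into its ``good'' part $\nu_{\le C}=\min(\rho,C)m$ below level $C$ and an ``excess'' part $\nu_{>C}=(\rho-C)^+m+\nu^s$ of total mass $c=\mathcal{F}_C(\nu)$, and then re-route only the excess through a midpoint supplied by the $CD(K,\infty)$ condition. The entropy inequality will force the new midpoint to be absolutely continuous with a well-controlled Shannon entropy, and a Markov-type tail estimate on that entropy will control how much density can pile up in one place. Gluing back via Lemma~\ref{lma:combined} produces the required $\tilde\nu$.

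\textbf{Construction and $\mathcal{F}_C$-monotonicity.} If $c=0$ set $\tilde\nu=\nu$. Otherwise fix $\pi\in\GeoOpt(\mu_0,\mu_1)$ with $(e_{1/2})_\#\pi=\nu$, pick via Radon--Nikodym a Borel $h\colon X\to[0,1]$ with $h\nu=\nu_{>C}$, and set $f=h\circ e_{1/2}$, so that $f\pi$ has total mass $c$ and $(e_{1/2})_\#(f\pi)=\nu_{>C}$. Normalize $\tilde\mu_i:=\frac1c(e_i)_\#(f\pi)$; since $(e_i)_\#(f\pi)\le\mu_i$, the densities satisfy $\|\tilde\rho_i\|_\infty\le M/c$ with $M=\max\{\|\rho_0\|_\infty,\|\rho_1\|_\infty\}$. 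Applying the $CD(K,\infty)$ condition to $(\tilde\mu_0,\tilde\mu_1)$ gives a midpoint $\nu'$ with
\[
\sE_\infty(\nu')\le\tfrac12\sE_\infty(\tilde\mu_0)+\tfrac12\sE_\infty(\tilde\mu_1)+\tfrac{K^-D^2}{8}\le\log(M/c)+\tfrac{K^-D^2}{8},
\]
and finiteness of this bound forces $\nu'=\tilde\rho' m$ to be absolutely continuous. Set $\tilde\nu:=\nu_{\le C}+c\nu'$; Lemma~\ref{lma:combined} yields $\tilde\nu\in\mathcal{M}(\mu_0,\mu_1)$. The density of $\tilde\nu$ is $\min(\rho,C)+c\tilde\rho'$, and the pointwise inequality $(\min(\rho,C)+c\tilde\rho'-C)^+\le c\tilde\rho'$ integrates to $\mathcal{F}_C(\tilde\nu)\le c=\mathcal{F}_C(\nu)$.

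\textbf{Tail bound.} For $H>C$ the same pointwise comparison gives
\[
\mathcal{F}_H(\tilde\nu)\le\int(c\tilde\rho'-(H-C))^+\,dm\le c\,\nu'\bigl(\{\tilde\rho'>(H-C)/c\}\bigr).
\]
Because $\spt\tilde\mu_i\subseteq\spt\mu_i$ and the supports are bounded, any midpoint of $(\tilde\mu_0,\tilde\mu_1)$ lies in a fixed bounded set $B\supseteq\spt\nu'$, and $V:=m(B)<\infty$ by Proposition~\ref{prop:doundedness}. Splitting the entropy integral at level $t\ge 1$, using $\tilde\rho'\log\tilde\rho'\ge\tilde\rho'\log t$ on $\{\tilde\rho'>t\}$ and $x\log x\ge -1/e$ on $\{\tilde\rho'\le t\}\cap B$, yields
\[
\log t\cdot\nu'(\{\tilde\rho'>t\})\le\sE_\infty(\nu')+V/e.
\]
Inserting the entropy estimate above and using $c\le 1$ together with the elementary inequality $c\log(1/c)\le 1/e$ absorbs the $\log(1/c)$ blow-up and produces
\[
\mathcal{F}_H(\tilde\nu)\le\frac{A}{\log(H-C)}\qquad\text{for all }H>C+1,
\]
where $A=A(M,K,D,V)$ is independent of both $\nu$ and $c$. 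Choosing $H_\epsilon:=C+e^{A/\epsilon}$ finishes the proof.

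\textbf{Main obstacle.} The delicate point is that the entropy bound $\sE_\infty(\nu')\le\log(M/c)+K^-D^2/8$ blows up as $c\downarrow 0$, whereas $H_\epsilon$ has to be chosen uniformly in $\nu$. The saving feature is the factor $c$ out front of the Markov estimate, which via $c\log(1/c)\le 1/e$ exactly neutralizes the divergent $\log(1/c)$ term and yields a $c$-free bound. A subsidiary technical step is to confirm that all midpoints of $(\tilde\mu_0,\tilde\mu_1)$ sit in a fixed bounded set so that $V<\infty$; this is immediate from $\spt\tilde\mu_i\subseteq\spt\mu_i$ and Proposition~\ref{prop:doundedness}.
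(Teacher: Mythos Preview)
Your proof is correct and follows essentially the same strategy as the paper: split off the excess mass above level $C$, push its two marginals through the $CD(K,\infty)$ midpoint to get an entropy-controlled $\nu'$, glue back via Lemma~\ref{lma:combined}, and convert the entropy bound into a Markov-type tail estimate. The only cosmetic differences are that you work directly with a single $\pi\in\GeoOpt(\mu_0,\mu_1)$ with $(e_{1/2})_\#\pi=\nu$ (which exists by lifting the concatenated geodesic $\mu_0\to\nu\to\mu_1$) rather than the paper's pair $\pi_1\in\GeoOpt(\nu,\mu_0)$, $\pi_2\in\GeoOpt(\nu,\mu_1)$, and you make the $c$-uniformity explicit through $c\log(1/c)\le 1/e$ instead of the paper's implicit dichotomy $\mathcal F_C(\nu)\lessgtr\epsilon$.
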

\begin{proof}
 Take $x_0 \in X$ and $R>0$ so that the supports of all the measures in $\mathcal{M}(\mu_0,\mu_1)$ are
 contained in $B(x_0,R)$. By Proposition \ref{prop:doundedness} the measure $m$ is boundedly finite
 and so we have $m(B(x_0,R)) < \infty$.
 Take $\epsilon > 0$ and $C \ge M$, where
  \[
   M = e^{K^-D^2/8} \max\{||\rho_0||_{L^{\infty}(X,m)},||\rho_1||_{L^{\infty}(X,m)}\}.
  \]
 Let $\nu = \rho m + \nu^s \in \mathcal{M}(\mu_0,\mu_1)$ with $\nu^s \perp m$ and suppose that $\mathcal{F}_C(\nu) > 0$.
 Define a function $f \colon X \to [0,1]$ by
 \[
  f(x) = \begin{cases}
          1 - \frac{C}{\rho(x)}, & \text{if }\rho(x) \ge C \\
          0, & \text{if }\rho(x) < C.
         \end{cases}
 \]

 Let $\pi_1\in \GeoOpt(\nu, \mu_0)$ and $\pi_2\in \GeoOpt(\nu, \mu_1)$, and define $g \colon \Geo(X) \to [0,1]$ by
 \[
  g = (e_0)^{-1} \max\{f,\chi_A\},
 \]
 where $A \subset X$ is a Borel set with $m(A) = 0$ and $\nu^s(A) = \nu^s(X)$.
 Then
 \[
  (e_0)_\#(g\pi_1) = (e_0)_\#(g\pi_2) = f\rho m + \nu^s.
 \]
 Select a geodesic $\Gamma \in \Geo(\mathcal{P}(X))$ with 
 \[
  \Gamma_0 = \frac{(e_1)_\# (g\pi_1)}{\mathcal{F}_C(\nu)} \qquad\text{and}\qquad\Gamma_1 = \frac{(e_1)_\# (g\pi_2)}{\mathcal{F}_C(\nu)}
 \]
 so that the corresponding measure on geodesics satisfies \eqref{eq:CD-def2}. Then
 \[
  \sE_\infty(\Gamma_{\frac12}) \le \frac12\sE_\infty(\Gamma_0) + \frac12\sE_\infty(\Gamma_1) + \frac{K^-}2\frac12\left(1-\frac12\right)D^2
                                  \le \log \frac{M}{\mathcal{F}_C(\nu)}.
 \]
 On the other hand, writing $\Gamma_\frac12 = \rho_\frac12 m$,
 \begin{align*}
  \sE_\infty(\Gamma_{\frac12}) & = \int_{\{\rho_\frac12 \ge \delta\}} \rho_\frac12 \log \rho_\frac12 dm
                + \int_{\{0 \le \rho_\frac12 < \delta\}} \rho_\frac12 \log \rho_\frac12 dm \\
          & \ge \log \delta \int_{\{\rho_\frac12 \ge \delta\}} \rho_\frac12 dm - \frac{m(B(x_0,R))}{e}.
 \end{align*}
 Therefore with $\delta > 1$ we get
 \begin{equation}\label{eq:Fbound}
    \mathcal{F}_\delta (\Gamma_{\frac12}) \le \int_{\{\rho_\frac12 \ge \delta\}} \rho_\frac12 dm 
      \le \frac{1}{\log \delta}\left(\log\frac{M}{\mathcal{F}_C(\nu)} + \frac{m(B(x_0,R))}{e}\right).
 \end{equation} 

 Define
 \[
  \omega = (1-f)\rho m + \mathcal{F}_C(\nu)\Gamma_{\frac12}.
 \]
 By Lemma \ref{lma:combined} we have $\omega \in \mathcal{M}(\mu_0,\mu_1)$. By taking $H_\epsilon>C$ so large that
 \[
  \frac{1}{\log H_\epsilon}\left(\log\frac{M}{\epsilon} + \frac{m(B(x_0,R))}{e}\right) \le \epsilon
 \]
  we get from \eqref{eq:Fbound} the required estimate
  \[
   \mathcal{F}_{H_\epsilon}(\omega) \le \mathcal{F}_C(\nu)\mathcal{F}_{H_\epsilon}(\Gamma_{\frac12}) \le \epsilon
  \]
 which proves the claim.
\end{proof}

In the boundedly compact case we were able to prove the existence of the minimizers of $\mathcal{F}_C$
for all values of $C$. In $CD(K,\infty)$ spaces we get the existence only for the values
that are greater than or equal to a critical threshold. Fortunately these are the only values of $C$
that will be needed in the proof for the existence of a good geodesic.

\begin{proposition}\label{prop:existence_noncompact}
 Assume that $(X,d)$ is a $CD(K,\infty)$ space and that 
 $\mu_0, \mu_1 \in \mathcal{P}^{ac}(X)$ with $\mu_0 = \rho_0 m$, $\mu_1 = \rho_1 m$ and
 $D = \diam(\spt \mu_0 \cup \spt \mu_1) < \infty$. Then for all
 \[
  C \ge e^{K^-D^2/8} \max\{||\rho_0||_{L^{\infty}(X,m)},||\rho_1||_{L^{\infty}(X,m)}\}
 \]
 there exists a minimizer of $\mathcal{F}_C$ in $\mathcal{M}(\mu_0,\mu_1)$.
\end{proposition}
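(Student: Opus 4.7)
The strategy is the direct method of the calculus of variations, mirroring Proposition \ref{prop:existence} for the boundedly compact case. The difficulty here is that $\mathcal{M}(\mu_0,\mu_1)$ need not be compact in a $CD(K,\infty)$ space, so tightness of a minimizing sequence must be produced by hand, and Lemma \ref{lma:smallF} is precisely the tool that makes this possible.

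First I would choose a minimizing sequence $(\nu_n)_{n=1}^\infty \subset \mathcal{M}(\mu_0,\mu_1)$ for $\mathcal{F}_C$ and apply Lemma \ref{lma:smallF} to each $\nu_n$ to obtain $\tilde\nu_n \in \mathcal{M}(\mu_0,\mu_1)$ satisfying $\mathcal{F}_C(\tilde\nu_n) \le \mathcal{F}_C(\nu_n)$ and $\mathcal{F}_{H_\epsilon}(\tilde\nu_n) \le \epsilon$ for every $\epsilon > 0$. The first inequality preserves the minimizing property of the sequence; the second is the effective substitute for a uniform density bound, and crucially the thresholds $H_\epsilon$ depend only on $\mu_0$, $\mu_1$, $K$, $C$ and a fixed ambient ball.

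Next I would establish tightness of $(\tilde\nu_n)$. Fix $x_0 \in X$ and $R > 0$ so that every element of $\mathcal{M}(\mu_0,\mu_1)$ is supported in $\overline{B(x_0,R)}$; by Proposition \ref{prop:doundedness}, $m(B(x_0,R+1)) < \infty$, so $m$ restricted to $\overline{B(x_0,R)}$ is a finite Borel measure on a complete separable space and is therefore tight. Given $\delta > 0$, set $\epsilon = \delta/2$ and pick a compact set $K_\delta \subset \overline{B(x_0,R)}$ with $m\bigl(\overline{B(x_0,R)} \setminus K_\delta\bigr) \le \delta/(2H_\epsilon)$. Splitting $\tilde\nu_n = \rho_n m + \tilde\nu_n^s$ according to whether $\rho_n < H_\epsilon$ yields
\[
 \tilde\nu_n(X \setminus K_\delta) \le H_\epsilon\, m\bigl(\overline{B(x_0,R)}\setminus K_\delta\bigr) + \mathcal{F}_{H_\epsilon}(\tilde\nu_n) \le \delta
\]
uniformly in $n$, so $(\tilde\nu_n)$ is tight.

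Finally, Prokhorov's theorem delivers a subsequence along which $\tilde\nu_n$ converges narrowly to some $\tilde\nu \in \mathcal{P}(X)$. Portmanteau forces $\spt \tilde\nu \subset \overline{B(x_0,R)}$, so all supports lie in one bounded set; consequently the function $y \mapsto \min\{d(x_0,y)^2, R^2\}$ is bounded and continuous and coincides with $d(x_0,\cdot)^2$ on every support, so the second moments converge and the narrow convergence upgrades to $W_2(\tilde\nu_n, \tilde\nu) \to 0$. Lemma \ref{lma:bdd} then yields $\tilde\nu \in \mathcal{M}(\mu_0,\mu_1)$, and the lower semicontinuity of $\mathcal{F}_C$ (Lemma \ref{lma:lsc}, applied on the bounded ambient ball where $m$ is finite) gives $\mathcal{F}_C(\tilde\nu) \le \liminf_n \mathcal{F}_C(\tilde\nu_n) \le \liminf_n \mathcal{F}_C(\nu_n)$, so $\tilde\nu$ is a minimizer. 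The main obstacle is the tightness step: without the quantitative control from Lemma \ref{lma:smallF}, mass could in principle escape into arbitrarily thin high-density spikes outside any prescribed compact set, and it is exactly the interplay of the $\epsilon$-$H_\epsilon$ tradeoff with the finite reference measure $m|_{\overline{B(x_0,R)}}$ that rules this out.
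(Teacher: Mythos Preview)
Your argument is correct and follows the same overall strategy as the paper: take a minimizing sequence, use Lemma~\ref{lma:smallF} to gain uniform control at every threshold $H_\epsilon$, extract a $W_2$-convergent subsequence, and conclude with Lemma~\ref{lma:bdd} and Lemma~\ref{lma:lsc}. The one genuine difference is in the extraction step. You prove tightness directly---bounding $\tilde\nu_n(X\setminus K_\delta)$ by $H_\epsilon\,m(\overline{B(x_0,R)}\setminus K_\delta)+\mathcal{F}_{H_\epsilon}(\tilde\nu_n)$---then invoke Prokhorov and upgrade narrow convergence to $W_2$-convergence via the common bounded support. The paper instead observes that the sets $\mathcal{A}_{H}=\{\omega:\mathcal{F}_H(\omega)=0,\ \spt\omega\subset B\}$ are relatively $W_2$-compact, notes that $W_2(\nu_n,\mathcal{A}_{H_k})\le 2^{-k}D$ for $n\ge k$, and runs a diagonal argument to produce a $W_2$-Cauchy subsequence by hand. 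Your route is arguably more streamlined, since it appeals to a standard tightness/Prokhorov package rather than building the Cauchy property level by level; the paper's route has the minor advantage of staying entirely inside the $W_2$-topology without passing through narrow convergence.
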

\begin{proof}
 Take a sequence $(\nu_n)_{n=0}^\infty \subset \mathcal{M}(\mu_0,\mu_1)$ so that
 \[
  \mathcal{F}_C(\nu_n) \to \inf\{\mathcal{F}_C(\omega) ~:~ \omega \in \mathcal{M}(\mu_0,\mu_1)\}.
 \]
 By Lemma \ref{lma:smallF} there exists a sequence $(H_k)_{k=0}^\infty \subset [0,\infty)$ so that,
 by redefining the sequence $(\nu_n)_{n=0}^\infty$ if necessary, we may assume for all
 $n \ge k \ge 0$ the estimate 
 \begin{equation}\label{eq:sas}
  \mathcal{F}_{H_k}(\nu_n) \le 2^{-k}.
 \end{equation}
 Because $D < \infty$ we have
 \[
  \mathcal{M}(\mu_0,\mu_1) \subset \{\omega \in \mathcal{P}(X) ~:~ \spt \omega \subset B\}
 \]
 for some closed and bounded set $B \subset X$.
 By Proposition \ref{prop:doundedness} the measure $m$ is boundedly finite and so the set 
 \[
  \mathcal{A}_H = \{\omega \in \mathcal{P}(X) ~:~ \mathcal{F}_H(\omega) = 0 \text{ and }\spt \omega \subset B\}
 \]
 is relatively compact in $(\mathcal{P}(X),W_2)$ and nonempty for all $H \ge C$. On the other hand, by \eqref{eq:sas}
 we have
 \[
  W_2(\nu_n, \mathcal{A}_{H_k}) \le 2^{-k}D
 \]
 for all $n \ge k \ge 0$.
 Using this with $k=1$ gives the existence of a subsequence $(\nu_{1_n})_{n=0}^\infty$ of $(\nu_n)_{n=0}^\infty$ with
 \[
  W_2(\nu_{1_i},\nu_{1_j}) \le D
 \]
 for all $i,j \in \N$. Inductively using \eqref{eq:sas} we define for all $k \ge 1$ a subsequence
 $(\nu_{k_n})_{n=0}^\infty$ of $(\nu_{(k-1)_n})_{n=0}^\infty$ so that
 \[
  W_2(\nu_{k_i},\nu_{k_j}) \le 2^{1-k}D
 \]
 for all $i,j \in \N$. By a diagonal argument we then get a subsequence converging in the Wasserstein distance
 to a measure $\nu$ which is in $\mathcal{M}(\mu_0,\mu_1)$ by Lemma \ref{lma:bdd}. Then by Lemma \ref{lma:lsc} 
 we conclude that the measure $\nu$ is a minimizer of $\mathcal{F}_C$ in $\mathcal{M}(\mu_0,\mu_1)$.
\end{proof}

\begin{remark}
 Notice that if we knew a priori that
 \begin{equation}\label{eq:inf}
  \inf_{\omega \in \mathcal{M}(\mu_0,\mu_1)}\mathcal{F}_C(\omega) = 0,  
 \end{equation}
 then the existence of the minimizer in Proposition \ref{prop:existence_noncompact} would follow immediately 
 without Lemma \ref{lma:smallF}. However, our proof for \eqref{eq:inf} in Proposition \ref{prop:middensity} 
 will use the existence of the minimizer, so Lemma \ref{lma:smallF} here seems to be a necessary step.
\end{remark}

\subsection{$L^\infty$-estimate for the minimizers}

Now that we have established the needed basic properties of the set $\mathcal{I}_\lambda(\mu_0,\mu_1)$ and
the functional $\mathcal{F}_C$
we turn to the properties of the minimizers. What we are aiming at here is an $L^\infty$-bound on the density of a good midpoint.
In order to quantify some estimates in the proof we first have to go slightly above the final threshold.

\begin{proposition}\label{prop:middensity}
 Assume that $(X,d,m)$ is a $CD(K,N)$ space for some $K \in \R$ and $N \in (0,\infty]$
 and that $\mu_0, \mu_1 \in \mathcal{P}^{ac}(X,m)$
 have bounded support and densities $\rho_0$ and $\rho_1$, respectively. Suppose in addition that all
 measures in $\GeoOpt(\mu_0,\mu_1)$ are concentrated on geodesics with length at most $D$. Then for any
 \[
  C > C(N,K,D)\max\{||\rho_0||_{L^{\infty}(X,m)},||\rho_1||_{L^{\infty}(X,m)}\}
 \]
 we have
 \[
  \min_{\nu \in \mathcal{M}(\mu_0,\mu_1)} \mathcal{F}_C(\nu) = 0.
 \]
\end{proposition}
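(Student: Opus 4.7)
The plan is to argue by contradiction using the spreading machinery of Proposition \ref{prop:spreading}. The set $\mathcal{M}(\mu_0,\mu_1)$ contains a minimizer $\nu$ of $\mathcal{F}_C$ by Proposition \ref{prop:existence} in the boundedly compact setting or Proposition \ref{prop:existence_noncompact} in the $CD(K,\infty)$ setting. Suppose for contradiction that $\mathcal{F}_C(\nu)=:c>0$; I will construct $\tilde\nu\in\mathcal{M}(\mu_0,\mu_1)$ with $\mathcal{F}_C(\tilde\nu)<c$, which is the sought contradiction.

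Write $\nu=\rho m+\nu^s$ and pick $\pi\in\GeoOpt(\mu_0,\mu_1)$ realizing $\nu$ at time $1/2$, concentrated on geodesics of length at most $D$. Define the excess extraction cutoff $G\colon X\to[0,1]$ by $G=(1-C/\rho)^+$ on $\{\rho>0\}$ and $G=1$ on a Borel support of $\nu^s$ of zero $m$-measure, and lift to $f=G\circ e_{1/2}$ on $\Geo(X)$, so $\int f\,d\pi=c$. The normalized extracted endpoints $\tilde\mu_i=(e_i)_\#(f\pi)/c$ ($i=0,1$) are absolutely continuous with densities bounded by $M/c$, where $M=\max\{\|\rho_0\|_{L^\infty},\|\rho_1\|_{L^\infty}\}$. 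Since $\spt\tilde\mu_i\subset\spt\mu_i$, the diameters remain bounded by $D$, and Proposition \ref{prop:spreading} applied to $\tilde\mu_0,\tilde\mu_1$ produces $\tilde\pi\in\GeoOpt(\tilde\mu_0,\tilde\mu_1)$ whose midpoint $\tilde\Gamma_{1/2}=\tilde\rho_{1/2}m+\tilde\mu^s_{1/2}$ satisfies $m(\{\tilde\rho_{1/2}>0\})\ge c/(C(N,K,D)M)$. Lemma \ref{lma:combined} then ensures that $\tilde\nu:=(e_{1/2})_\#((1-f)\pi)+c\,\tilde\Gamma_{1/2}$ lies in $\mathcal{M}(\mu_0,\mu_1)$; a direct check identifies $(e_{1/2})_\#((1-f)\pi)=\min(\rho,C)\,m$, so $\tilde\nu$ has absolutely continuous density $\min(\rho,C)+c\,\tilde\rho_{1/2}$ and singular part $c\,\tilde\mu^s_{1/2}$. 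The pointwise inequality $(a+b-C)^+\le b$ for $a\le C$, $b\ge 0$ yields $\mathcal{F}_C(\tilde\nu)\le c$ as a routine upper bound.

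The main obstacle is upgrading this to a \emph{strict} inequality, and it is exactly here that the strict slack $C-C(N,K,D)M>0$ is consumed. The simplest regime is when the excess $c$ is large enough that the support estimate $m(\{\tilde\rho_{1/2}>0\})\ge c/(C(N,K,D)M)$ exceeds the trivial bound $m(\{\rho\ge C\})\le 1/C$ obtained by integrating $\rho$; then $\tilde\rho_{1/2}$ must place positive mass on $\{\rho<C\}$, and the inequality $(a+b-C)^+<b$ holds on a set of positive $m$-measure. For the complementary small-excess regime, the construction is repeated with the extraction threshold lowered to $C-\delta$, choosing $\delta$ slightly above $C(N,K,D)M$ (possible exactly because $C>C(N,K,D)M$): a worst-case Markov-type argument, using that the spread density $\mathcal{F}_{C-\delta}(\nu)\tilde\rho'_{1/2}$ has average at most $C(N,K,D)M$ on a support of $m$-measure at least $\mathcal{F}_{C-\delta}(\nu)/(C(N,K,D)M)$, forces the absolutely continuous part of the new measure to stay below $C$ everywhere, which in turn drives the excess strictly below $c$ and produces the contradiction. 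The $N=\infty$ case is parallel, invoking the second clause of Proposition \ref{prop:spreading}.
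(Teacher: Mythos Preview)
Your extraction--spread--recombine framework (via Lemma~\ref{lma:combined} and Proposition~\ref{prop:spreading}) matches the paper's strategy, and your ``large excess'' case is correct: when $c > C(N,K,D)M/C$ the support bound from Proposition~\ref{prop:spreading} forces $\{\tilde\rho_{1/2}>0\}$ to meet $\{\rho<C\}$ in positive measure, and the pointwise strict inequality $(a+b-C)^+<b$ there yields $\mathcal{F}_C(\tilde\nu)<c$.

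The gap is in your ``small excess'' regime. You claim that after lowering the extraction threshold to $C-\delta$ and spreading, a ``worst-case Markov-type argument'' forces the absolutely continuous density $\min(\rho,C-\delta)+c'\tilde\rho'_{1/2}$ to stay below $C$ \emph{everywhere}. But Proposition~\ref{prop:spreading} gives only a \emph{lower} bound on $m(\{\tilde\rho'_{1/2}>0\})$; it says nothing about pointwise upper bounds on $\tilde\rho'_{1/2}$, nor does it exclude a singular part. Knowing that $c'\tilde\rho'_{1/2}$ has average at most $C(N,K,D)M$ on its support is useless for bounding it by $\delta$ (or anything else) pointwise. So the assertion that the new density is everywhere below $C$ is unjustified, and in general false. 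Without it you have only $\mathcal{F}_C(\tilde\nu')\le c$, not the strict inequality needed for a contradiction.

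The paper avoids this by \emph{not} attempting to strictly decrease $\mathcal{F}_C$ in all cases. Instead it picks among the minimizers one $\nu$ that (almost) \emph{maximizes} $m(\{\rho_\nu>C\})$, and redistributes only a small fraction $\tfrac{\delta}{C+\delta}$ of the mass over a set $A'=\{\rho_\nu>C+\delta\}$. An exact computation gives
\[
\mathcal{F}_C(\nu)-\mathcal{F}_C(\tilde\nu)=\int_{\{\rho_\nu<C\}}\min\Bigl\{C-\rho_\nu,\ \tfrac{\delta}{C+\delta}\nu(A')\rho_\Gamma\Bigr\}\,dm\ge 0,
\]
so minimality forces the spread support $E=\{\rho_\Gamma>0\}$ to lie (up to a null set) inside $\{\rho_\nu\ge C\}$. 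But then $\tilde\nu$ is also a minimizer with $\{\rho_{\tilde\nu}>C\}\supset E$ and $m(E)\ge (C/M)^{1/2}m(\{\rho_\nu>C\})$, contradicting the near-maximality of $m(\{\rho_\nu>C\})$. In short, the contradiction comes from comparing \emph{superlevel-set sizes} among minimizers, not from a direct strict decrease of $\mathcal{F}_C$; this is precisely the idea missing from your small-$c$ case.
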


\begin{proof}
 Write
 \[
  M = C(N,K,D)\max\{||\rho_0||_{L^{\infty}(X,m)},||\rho_1||_{L^{\infty}(X,m)}\}.
 \]
 Suppose that the conclusion is not true. Let $\mathcal{M}_\text{min} \subset \mathcal{M}(\mu_0,\mu_1)$ be the set of
 minimizers of $\mathcal{F}_C$ in $\mathcal{M}(\mu_0,\mu_1)$, which by Proposition \ref{prop:existence} and
 Proposition \ref{prop:existence_noncompact} is always nonempty.
 Take $\nu \in \mathcal{M}_\text{min}$  for which
 \begin{equation}\label{eq:almostmaxpos}
  m(\{x \in X ~:~ \rho_\nu(x) > C\}) \ge \left(\frac{M}{C}\right)^{\frac14} \sup_{\omega \in \mathcal{M}_\text{min}}m(\{x \in X ~:~ \rho_\omega(x) > C\}),
 \end{equation}
 where $\nu = \rho_\nu m + \nu^s$ with $\nu^s \perp m$ and $\omega = \rho_\omega m + \omega^s$ with $\omega^s \perp m$.

 Assume first that the set 
 \[
  A = \{x \in X ~:~ \rho_\nu(x) > C\}  
 \]
 has positive $m$-measure.
 Then there exists $\delta >0$ so that 
 \[
  m(A') > \left(\frac{M}{C}\right)^{\frac12} m(A)  
 \]
 with
 \begin{equation}\label{eq:deltabound}
  A' = \{x \in A ~:~ \rho_\nu(x) > C + \delta\}.  
 \end{equation}

 Let $\pi_1\in \GeoOpt(\nu, \mu_0)$ and $\pi_2\in \GeoOpt(\nu, \mu_1)$, and take a geodesic $\Gamma \in \Geo(\mathcal{P}(X))$ 
 given by Proposition \ref{prop:spreading} with 
 \[
  \Gamma_0 = \frac{(e_1)_\# \pi_1|_{\{\gamma_0 \in A'\}}}{\nu(A')} \qquad\text{and}\qquad\Gamma_1 = \frac{(e_1)_\# \pi_2|_{\{\gamma_0 \in A'\}}}{\nu(A')}
 \]
 such that the corresponding measure on geodesics satisfies \eqref{eq:bigsupport}.

 We write $\Gamma_{\frac12} = \rho_\Gamma m + \Gamma^s$ with $\Gamma^s \perp m$ and abbreviate
 \[
  E = \left\{x \in X ~:~ \rho_\Gamma(x) > 0\right\}.  
 \]
 From \eqref{eq:bigsupport} we get 
 \[
  m(E) \ge \frac{\nu(A')}{M} \ge \frac{C}{M} m(A') \ge \left(\frac{C}{M}\right)^{\frac12} m(A).
 \]

 Now consider a new measure $\tilde\nu = \rho_{\tilde\nu} m + \tilde\nu^s$, with $\tilde\nu^s \perp m$, defined as the combination
 \[
  \tilde\nu = \nu|_{X \setminus A'} + \frac{C}{C+ \delta} \nu|_{A'} + \frac{\delta}{C+\delta}\nu(A') \Gamma_{\frac12}.
 \]

 By Lemma \ref{lma:combined} we have $\tilde\nu \in \mathcal{M}(\mu_0,\mu_1)$. Due to the definition \eqref{eq:deltabound}
 we only redistribute some of the mass above the density $C$ when we replace the measure $\nu$ by the measure $\tilde\nu$.
 See Figure \ref{fig:redistribution} for an illustration of the redistributed part of the measure.
 Let us now calculate how much the excess mass functional changes in this replacement.
 \begin{align*}
  \mathcal{F}_C(\nu) &- \mathcal{F}_C(\tilde\nu) 
    =  \int_X \left(\rho_\nu - C\right)^+dm + \nu^s(X) - \int_X \left(\rho_{\tilde\nu} - C\right)^+dm - \tilde\nu^s(X) \\
    = & \int_{X \setminus A'}\left(\left(\rho_\nu - C\right)^+ - \left(\rho_\nu + \frac{\delta}{C+\delta}\nu(A')\rho_\Gamma - C\right)^+ \right)dm\\
    & + \int_{A'}\left(\left(\rho_\nu - C\right)^+ - \left(\frac{C}{C+\delta}\rho_\nu + \frac{\delta}{C+\delta}\nu(A')\rho_\Gamma - C\right)^+ \right)dm\\
    & + \frac{\delta}{C+\delta}\left(\nu^s(A') - \nu(A')\Gamma^s(X)\right)\\
    = & \int_{X \setminus A'}\left(\left(\rho_\nu - C\right)^+ - \left(\rho_\nu + \frac{\delta}{C+\delta}\nu(A')\rho_\Gamma - C\right)^+ \right)dm\\
    & + \int_{A'}\frac{\delta}{C+\delta}\left(\rho_\nu - \nu(A')\rho_\Gamma\right)dm + \frac{\delta}{C+\delta}\left(\nu^s(A') - \nu(A')\Gamma^s(X)\right)\\
    = & \int_{X \setminus A'}\left(\left(\rho_\nu - C\right)^+ - \left(\rho_\nu + \frac{\delta}{C+\delta}\nu(A')\rho_\Gamma - C\right)^+ + \frac{\delta}{C+\delta}\nu(A')\rho_\Gamma\right)dm\\   
   =  & \int_{\{\rho_\nu < C \le \frac{\delta}{C+\delta}\nu(A')\rho_\Gamma + \rho_\nu\}}(C-\rho_\nu)dm + \int_{\{\frac{\delta}{C+\delta}\nu(A')\rho_\Gamma + \rho_\nu < C\}}\frac{\delta}{C+\delta}\nu(A')\rho_\Gamma dm\\
   = & \int_{\{\rho_\nu < C\}}\min\left\{C-\rho_\nu, \frac{\delta}{C+\delta}\nu(A')\rho_\Gamma\right\}dm.
 \end{align*}

\begin{figure}
  \centering
  \includegraphics[width=\textwidth]{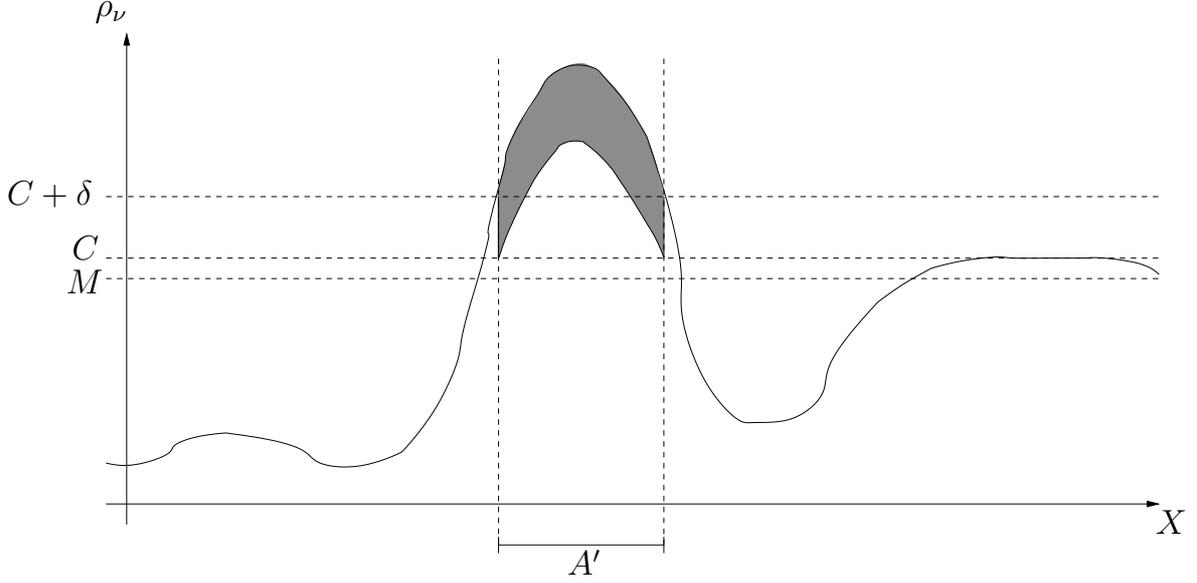}
  \caption{When we replace the measure $\nu$ by the new measure $\tilde\nu$ we redistribute the gray part of the measure.}
  \label{fig:redistribution}
 \end{figure}
 

 Because of the minimality of $\mathcal{F}_C$ at $\nu$ this integral must be zero. Therefore also
 \[
  m(E \cap \{x \in X ~:~ \rho_\nu(x) < C\}) = 0.
 \]
 On the other hand, for any $y \in E \cap \{x \in X : \rho_\nu(x) \ge C\}$ we have $\rho_{\tilde\nu}(y) > C$.
 This together with the assumption \eqref{eq:almostmaxpos} leads to a contradiction
 \begin{align*}
  m(\{x \in X ~:~ \rho_{\tilde\nu}(x) > C\}) & \ge m(E) \ge \left(\frac{C}{M}\right)^{\frac12} m(A) \\
    & \ge \left(\frac{C}{M}\right)^{\frac14} \sup_{\omega \in \mathcal{M}_\text{min}}m(\{x \in X ~:~ \rho_\omega(x) > C\}).
 \end{align*}

 Suppose now that $m(A)=0$. This means that $\nu$
 must have a singular part. Similarly as above, we can redistribute this singular part using \eqref{eq:bigsupport}. This
 leads immediately to a contradiction because at the combination of the redistributed singular part and the absolutely continuous
 part of $\nu$ the functional $\mathcal{F}_C$ has lower value than at $\nu$.
\end{proof}

Now we can obtain the correct threshold level using the previous Proposition \ref{prop:middensity}.

\begin{corollary}\label{cor:middensity}
 With the assumptions of Proposition \ref{prop:middensity} there exists
 $\nu \in \mathcal{M}(\mu_0,\mu_1)$ with $\mathcal{F}_C(\nu) = 0$ for
 $
  C = C(N,K,D)\max\{||\rho_0||_{L^{\infty}(X,m)},||\rho_1||_{L^{\infty}(X,m)}\}.
 $
\end{corollary}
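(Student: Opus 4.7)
The plan is to obtain the corollary as a closure/limiting argument from Proposition \ref{prop:middensity}. Set
\[
 C = C(N,K,D)\max\{\|\rho_0\|_{L^\infty(X,m)},\|\rho_1\|_{L^\infty(X,m)}\}
\]
and pick a strictly decreasing sequence $C_n \searrow C$ with $C_n > C$. For each $n$, Proposition \ref{prop:middensity} applied with threshold $C_n$ combined with Proposition \ref{prop:existence} or Proposition \ref{prop:existence_noncompact} produces $\nu_n \in \mathcal{M}(\mu_0,\mu_1)$ with $\mathcal{F}_{C_n}(\nu_n) = 0$. Equivalently, $\nu_n = \rho_n m$ with $\rho_n \le C_n$ $m$-a.e.\ and no singular part.

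The next step is to extract a limit $\nu \in \mathcal{M}(\mu_0,\mu_1)$. All $\nu_n$ are supported in a common bounded set $B$ (the supports of midpoints between $\mu_0$ and $\mu_1$ lie in a fixed bounded set because $\spt\mu_0 \cup \spt\mu_1$ is bounded). By Proposition \ref{prop:doubling} or Proposition \ref{prop:doundedness} the measure $m$ is boundedly finite, so $m(B) < \infty$. In the boundedly compact case, Lemma \ref{lma:cmpt} immediately provides a $W_2$-convergent subsequence. In the non--boundedly-compact $CD(K,\infty)$ setting we use instead the bound $\nu_n(A) \le C_1 \, m(A \cap B)$ together with the inner regularity of the finite measure $m|_B$ on the complete separable space $X$ to obtain tightness of $\{\nu_n\}$; Prokhorov gives a weakly convergent subsequence, and since the supports all lie in the bounded set $B$ this weak convergence upgrades to $W_2$-convergence. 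By Lemma \ref{lma:bdd} the limit $\nu$ is again in $\mathcal{M}(\mu_0,\mu_1)$.

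Finally, lower semicontinuity (Lemma \ref{lma:lsc}) applied to the threshold $C$ yields
\[
 \mathcal{F}_C(\nu) \le \liminf_{n \to \infty}\mathcal{F}_C(\nu_n).
\]
But since $\rho_n \le C_n$ and $\nu_n$ is absolutely continuous with support in $B$,
\[
 \mathcal{F}_C(\nu_n) = \int_B (\rho_n - C)^+ \, dm \le (C_n - C)\, m(B) \xrightarrow[n\to\infty]{} 0,
\]
so $\mathcal{F}_C(\nu) = 0$, as desired.

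The only nontrivial obstacle is the compactness step in the $CD(K,\infty)$ case; the uniform density bound together with bounded support and bounded finiteness of $m$ is exactly what bridges the gap between the two settings, and once this is in place the rest is routine lower semicontinuity.
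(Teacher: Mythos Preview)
Your proof is correct, but the paper takes a shorter route that avoids the compactness/extraction step entirely. The paper simply observes that a minimizer of $\mathcal{F}_C$ in $\mathcal{M}(\mu_0,\mu_1)$ already exists at the critical threshold $C$ itself (this is the content of Proposition \ref{prop:existence} and Proposition \ref{prop:existence_noncompact}, the latter being stated precisely for $C \ge C(N,K,D)\max\{\|\rho_0\|_\infty,\|\rho_1\|_\infty\}$), and then bounds its value directly via the pointwise inequality
\[
 \mathcal{F}_C(\omega) \le \mathcal{F}_{C'}(\omega) + (C'-C)\,m(A)
\]
valid for any $\omega$ supported in the common bounded set $A$. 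Since $\min_\omega \mathcal{F}_{C'}(\omega)=0$ for $C'>C$ by Proposition \ref{prop:middensity}, letting $C'\searrow C$ gives $\min_\omega \mathcal{F}_C(\omega)=0$ immediately. Your approach instead passes to a limit of the minimizers $\nu_n$ at thresholds $C_n>C$ and invokes lower semicontinuity; this works, and your tightness argument in the $CD(K,\infty)$ case (domination by $C_1\,m|_B$ with $m|_B$ finite and Radon) is valid, but it duplicates effort: the hard compactness work was already packaged into Proposition \ref{prop:existence_noncompact}, and once one has a minimizer at $C$ there is no need to extract anything.
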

\begin{proof}
 By Proposition \ref{prop:middensity} we know that 
 \[
  \min_{\omega \in \mathcal{M}(\mu_0,\mu_1)}\mathcal{F}_{C'}(\omega) = 0
 \]
 for all $C'>C$. Because $\mu_0$ and $\mu_1$ have bounded support, all the measures in $\mathcal{M}(\mu_0,\mu_1)$ are supported
 on a bounded set $A \subset X$. Therefore,
 \[
  \min_{\omega \in \mathcal{M}(\mu_0,\mu_1)}\mathcal{F}_{C}(\omega) 
   \le \min_{\omega \in \mathcal{M}(\mu_0,\mu_1)}\mathcal{F}_{C'}(\omega) + (C' - C)m(A) \to 0
 \]
 as $C' \searrow C$.
\end{proof}

\subsection{From the midpoints to a geodesic}

Corollary \ref{cor:middensity} together with Lemma \ref{lma:lsc} now gives the geodesic $\Gamma$ of 
Theorem \ref{thm:goodgeodesic}.

\begin{proof}[Proof of Theorem \ref{thm:goodgeodesic}]
 Let us first define the geodesic $\Gamma$ for a dense set of parameters in the following inductive manner:
 first set $\Gamma_0 = \mu_0$ and $\Gamma_1 = \mu_1$. Now assume that for some $n \in \N$ we have defined
 $\Gamma_{k2^{-n}} = \rho_{k2^{-n}} m$ for all integers $0 \le k \le 2^{n}$ and that for these we have
 \begin{equation}\label{eq:indassum}
  ||\rho_{k2^{-n}}||_{L^{\infty}(X,m)} \le 
      \prod_{i=1}^nC(N,K,2^{-i+1}D)\max\{||\rho_0||_{L^{\infty}(X,m)},||\rho_1||_{L^{\infty}(X,m)}\}.
 \end{equation}
 
 Because of the assumption $D < \infty$
 and the fact that any geodesic in the Wasserstein space $(\mathcal{P}(X),W_2)$ between $\mu_0$ and $\mu_1$ can
 be considered as a measure in $\GeoOpt(\mu_0,\mu_1)$, we have that any measure in $\GeoOpt(\Gamma_{k2^{-n}},\Gamma_{(k+1)2^{-n}})$
 is concentrated on geodesics with length at most $2^{-n}D$.

 Now define for all odd $0 \le k \le 2^{n+1}$ the measure $\Gamma_{k2^{-n-1}} = \rho_{k2^{-n-1}}m$ to be a measure in
 $\mathcal{M}(\Gamma_{(k-1)2^{-n-1}},\Gamma_{(k+1)2^{-n-1}})$ given by Corollary \ref{cor:middensity}.
 Then by our inductive assumption \eqref{eq:indassum} the estimate
 \begin{align*}
  ||\rho_{k2^{-n-1}}||_{L^{\infty}(X,m)} 
     & \le C(N,K,2^{-n}D)\max\{||\rho_{(k-1)2^{-n-1}}||_{L^{\infty}(X,m)},||\rho_{(k+1)2^{-n-1}}||_{L^{\infty}(X,m)}\}\\
     & \le \prod_{i=1}^{n+1}C(N,K,2^{-i+1}D)\max\{||\rho_0||_{L^{\infty}(X,m)},||\rho_1||_{L^{\infty}(X,m)}\}      
 \end{align*} 
 holds. The rest of the geodesic $\Gamma$ is defined by completion. The validity of the estimates \eqref{eq:densitybound_finite}
 and \eqref{eq:densitybound_infinite} for all $t \in [0,1]$ follow then from Lemma \ref{lma:lsc}.
\end{proof}

\section{Local Poincar\'e inequalities using the good geodesics}\label{sec:poincare}

Let us now show how the density bounds we have obtained imply the local Poincar\'e inequalities.
Although this part of the proof is almost the same as the one given in \cite{R2011} for the Poincar\'e inequalities in metric spaces
with Ricci curvature bounded from below in the sense of Lott and Villani, we will repeat the proof for the convenience
of the reader. Notice also that the proof we follow from \cite{R2011} for a large part follows the proof of
 \cite[Theorem 2.5]{LV2007}. 

The difference here to the proof in \cite{R2011} is that we have chosen to define the sets $B^+$ and $B^-$
slightly differently so that the proof works also for measures $m$ that have atoms. This change results in an extra multiplication
by $2$ of the constant in the Poincar\'e inequality. Since already the constant given by the proof in \cite{R2011} was 
not sharp, we do not care too much about increasing the constant slightly in order to simplify the exposition.

\begin{theorem}\label{thm:general}
 Let $(X,d)$ be a metric space with a boundedly finite measure $m$. 
 Suppose that there exists a function $C \colon [0,\infty) \to [1,\infty)$ so that for any
 $\mu_0, \mu_1 \in \mathcal{P}^{ac}(X,m)$ with $D = \diam(\spt\mu_0\cup\spt \mu_1) < \infty$ there
 exists a measure $\pi \in \GeoOpt(\mu_0,\mu_1)$ so that for all $t \in [0,1]$ we have $(e_t)_\#\pi = \rho_tm$ with
 \begin{equation}\label{eq:densitybound_general}
  ||\rho_t||_{L^{\infty}(X,m)} \le C(D) \max\{||\rho_0||_{L^{\infty}(X,m)},||\rho_1||_{L^{\infty}(X,m)}\}.
 \end{equation}
 Then the space $(X,d,m)$ supports the local Poincar\'e type inequality
 \[
  \int_{B(x,r)}|u - \langle u\rangle_{B(x,r)}|dm \le 8 r C(2r) \int_{B(x,2 r)}gdm.
 \]
\end{theorem}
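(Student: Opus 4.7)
The plan is to follow the classical Lott--Villani/Heinonen--Koskela route to a Poincar\'e inequality, using the hypothesis on density-bounded geodesics as a substitute for a doubling measure. Fix $x\in X$ and $r>0$ with $m(B(x,r))>0$, and set $a=\langle u\rangle_{B(x,r)}$. Decompose $B(x,r)$ into $B^+$ (essentially $\{u\ge a\}$) and $B^-$ (essentially $\{u\le a\}$); the slightly delicate point is to arrange this splitting so that the argument remains valid when $m$ has atoms on $\{u=a\}$, which is where an extra factor of $2$ is lost compared to the non-atomic case.

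Next I introduce the two probability measures
\[
 \mu_0=\frac{1}{m(B^+)}\chi_{B^+}m,\qquad \mu_1=\frac{1}{m(B^-)}\chi_{B^-}m,
\]
assuming $m(B^+)>0$ and $m(B^-)>0$ (otherwise $u=a$ $m$-a.e.\ on $B(x,r)$ and the inequality is immediate). Their supports lie in $B(x,r)$, so $D\le 2r$, and the hypothesis produces $\pi\in\GeoOpt(\mu_0,\mu_1)$ with $(e_t)_\#\pi=\rho_t m$ and
\[
 \|\rho_t\|_{L^\infty(X,m)}\le C(2r)\max\bigl\{m(B^+)^{-1},m(B^-)^{-1}\bigr\}.
\]
A short triangle-inequality check gives $\spt\rho_t\subset\overline{B(x,2r)}$ for every $t\in[0,1]$.

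The heart of the proof is then to combine the upper gradient inequality with the density bound. For $\pi$-a.e.\ $\gamma$ we have $\gamma_0\in B^+$, $\gamma_1\in B^-$, and
\[
 u(\gamma_0)-u(\gamma_1)\le l(\gamma)\int_0^1 g(\gamma_t)\,dt\le 2r\int_0^1 g(\gamma_t)\,dt.
\]
Integrating against $\pi$, applying Fubini and pushing forward by $e_t$ yields
\[
 \int u\,d\mu_0-\int u\,d\mu_1\le 2r\int_0^1\!\!\int_X g\rho_t\,dm\,dt\le 2r\,C(2r)\max\bigl\{m(B^+)^{-1},m(B^-)^{-1}\bigr\}\int_{B(x,2r)}g\,dm.
\]
Setting $W:=\int_{B^+}(u-a)\,dm=\int_{B^-}(a-u)\,dm=\tfrac12\int_{B(x,r)}|u-a|\,dm$, the left-hand side equals $W\bigl(m(B^+)^{-1}+m(B^-)^{-1}\bigr)$; using $\max\{s,t\}\le s+t$ for $s,t\ge 0$ the prefactors cancel, and the desired Poincar\'e inequality follows up to a universal constant.

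The main obstacle is not really analytic but conceptual: without density-bounded geodesics, the crude estimate $\int g\rho_t\,dm\le \|\rho_t\|_\infty\int_{B(x,2r)}g\,dm$ would be far too weak, and one would need doubling plus interpolation as in the classical arguments. Granted Theorem \ref{thm:goodgeodesic}, the argument is essentially a computation; the only genuine technicality is the careful handling of atoms in the definition of $B^\pm$, which is what accounts for the constant $8$ appearing in the statement rather than the $4$ one obtains in the non-atomic case.
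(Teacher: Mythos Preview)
Your argument is correct, but it differs from the paper's in one notable way: the paper splits the ball using the \emph{median} $M$ of $u$ on $B(x,r)$, so that $B^\pm=\{u\gtrless M\}$ both have measure at least $m(B)/2$, and then uses the crude bound $\|\rho_t\|_\infty\le 2C(2r)/m(B)$ together with $\int_B|u-\langle u\rangle_B|\,dm\le 2\int_B|u-M|\,dm$. You instead split at the \emph{mean} $a=\langle u\rangle_B$ and exploit the algebraic identity $\int u\,d\mu_0-\int u\,d\mu_1=W\bigl(m(B^+)^{-1}+m(B^-)^{-1}\bigr)$ with $W=\tfrac12\int_B|u-a|\,dm$, which lets the unknown factor $\max\{m(B^+)^{-1},m(B^-)^{-1}\}$ cancel against $m(B^+)^{-1}+m(B^-)^{-1}$. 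This is a genuinely nicer bookkeeping device: carrying it through gives $\int_B|u-a|\,dm\le 4rC(2r)\int_{B(x,2r)}g\,dm$, i.e.\ constant $4$ rather than the paper's $8$.

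A small correction: your remark that atoms cost a factor of $2$ is misplaced here. With $B^+=\{u\ge a\}$ and $B^-=\{u\le a\}$ (overlapping on $\{u=a\}$, where $u-a=0$ anyway), both sets automatically have positive measure whenever $a=\langle u\rangle_B$ is defined, and your cancellation argument goes through verbatim regardless of atoms. The atom issue in the paper is specific to the median construction (ensuring $m(B^\pm)\ge m(B)/2$ without a disjoint split), and is what pushes the paper's constant from $4$ to $8$; your mean-based approach sidesteps this entirely.
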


\begin{proof}
 Abbreviate $B = B(x,r)$ and define $M$ to be the median of $u$ in the ball $B$, i.e.
 \[
   M = \inf\left\{a \in \R : m(\{u > a\}) \le \frac{m(B)}2\right\}.
 \]
 Using the median $M$ we cover the ball $B$ with two Borel sets
 \[
  B^+ = \{x \in B ~:~ u(x) \ge M\}\qquad \text{and}\qquad B^- = \{x \in B ~:~ u(x) \le M\}.
 \]
 Notice that $m(B^+), m(B^-) \ge m(B)/2$. Let
 \[
  \pi \in \GeoOpt\left(\frac1{m(B^+)}m|_{B^+},\frac1{m(B^-)}m|_{B^-}\right)
 \]
 be the geodesic given \eqref{eq:densitybound_general} and let $\rho_t$ be the density of $(e_t)_\#\pi$ with respect to $m$. 
 By \eqref{eq:densitybound_general} we have for all $t \in [0,1]$ at $m$-almost every $y\in X$
 \[
  \rho_t(y) \le C(2r)\frac{2}{m(B)}.
 \]

 Now observe that we have an equality
 \[
   |u(\gamma_0) - u(\gamma_1)| = |u(\gamma_0) - M| + |M - u(\gamma_1)|
 \]
 for $\pi$-almost every $\gamma \in \Geo(X)$. Therefore
 \begin{align*}
  \int_{\Geo(X)} & |u(\gamma_0) -  u(\gamma_1)| d\pi(\gamma)\\
   & = \int_{\Geo(X)} |u(\gamma_0) - M|d\pi(\gamma) + \int_{\Geo(X)}|M - u(\gamma_1)|d\pi(\gamma)\\
   & = \frac{1}{m(B^+)}\int_{B^+}|u(x) - M|dm(x) + \frac{1}{m(B^-)}\int_{B^-}|M-u(x)|dm(x)\\
   & \ge \frac{1}{m(B)}\int_{B}|u(x) - M|dm(x).
 \end{align*}
 Since $\pi$-almost every $\gamma \in \Geo(X)$ is contained in the ball $B(x,2r)$ we have
{\allowdisplaybreaks 
 \begin{align*}
  \int_{B(x,r)}&|u - \langle u\rangle_{B(x,r)}|dm  \le \frac{1}{m(B)}\iint_{B \times B} |u(x) - u(y)|dm(x)dm(y) \\
   & \le \frac{1}{m(B)}\iint_{B \times B} (|u(x) - M| + |M - u(y)|)dm(x)dm(y) \\
   & = 2\int_{B} |u(x) - M|dm(x) 
    \le 2m(B)\int_{\Geo(X)} |u(\gamma_0) - u(\gamma_1)| d\pi(\gamma)\\
   & \le 4rm(B) \int_{\Geo(X)}\int_0^1 g(\gamma_t)dt d\pi(\gamma)
    = 4rm(B) \int_0^1 \int_Xg(x)\rho_t(x)dm(x)dt\\
   & \le 8r C(2r) \int_0^1 \int_{B(x,2 r)}g(x)dm(x)dt 
    = 8 r C(2r) \int_{B(x,2 r)}gdm.
 \end{align*}
 }
\end{proof}

 Theorem \ref{thm:main2} now follows immediately by combining Theorem \ref{thm:goodgeodesic} and Theorem \ref{thm:general}.
 To get Theorem \ref{thm:main} we have to recall also the Proposition \ref{prop:doubling}.

 Let us end this section by noting that the existence of good geodesics and hence the local Poincar\'e inequality
 follows also from the assumption that we have displacement convexity for some functional from quite a large class of functionals.
 Let $F \colon [0,\infty) \to \R$  be a convex function. From it we define a functional 
 $\sF \colon \mathcal{P}(X) \to [-\infty,\infty]$ by setting
 \begin{equation}\label{eq:functionaldef}
   \sF(\mu) = \int_X F(\rho)dm + F'(\infty)\mu^s(X),  
 \end{equation}
 where $\mu = \rho m + \mu^s$, $\mu^s \perp m$ and the derivative at infinity is defined as
 \[
  F'(\infty) = \lim_{r \to \infty}\frac{F(r)}{r}.
 \]
 We say this functional is
 weakly displacement convex in the space $(\mathcal{P}(X),W_2)$ if for any two measures $\mu_0, \mu_1 \in \mathcal{P}(X)$ with $W_2(\mu_0,\mu_1) < \infty$
 there exists a measure $\pi \in \GeoOpt(\mu_0,\mu_1)$ so that
 \[
  \sF((e_t)_\#\pi) \le (1-t)\sF(\mu_0) + t\sF(\mu_1).
 \]

 \begin{theorem}\label{thm:convexitypoincare}
  Let $(X,d)$ be boundedly compact metric spaces with a locally finite measure $m$ and $F \colon [0,\infty) \to \R$
  a convex function for which $F(x)/x$ is strictly increasing, $F(0)=0$ and $F'(\infty) = \infty$. Suppose that the corresponding
  functional $\sF$ given by \eqref{eq:functionaldef} is weakly displacement convex in $(\mathcal{P}(X),W_2)$.

  Then for any $\mu_0, \mu_1 \in \mathcal{P}^{ac}(X,m)$ with $D = \diam(\spt\mu_0\cup\spt \mu_1) < \infty$ there
  exists a measure $\pi \in \GeoOpt(\mu_0,\mu_1)$ so that for all $t \in [0,1]$ we have $(e_t)_\#\pi = \rho_tm$ with
 \[
  ||\rho_t||_{L^{\infty}(X,m)} \le \max\{||\rho_0||_{L^{\infty}(X,m)},||\rho_1||_{L^{\infty}(X,m)}\}
 \]
  In particular, we have the local Poincar\'e type inequality
  \[
   \int_{B(x,r)}|u - \langle u\rangle_{B(x,r)}|dm \le 8 r \int_{B(x,2 r)}gdm.
  \]
 \end{theorem}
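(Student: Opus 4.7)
The plan is to mirror the construction of Section \ref{sec:construction}, isolating the single place where the $CD(K,N)$ hypothesis was used---the mass-spreading Proposition \ref{prop:spreading}---and replacing it with an analogous estimate derived from the weak displacement convexity of $\sF$ together with the structural assumptions on $F$. Once this spreading bound is established with no amplifying factor, the rest of the construction goes through verbatim with constant $1$ in place of $C(N,K,D)$.

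The analog of Proposition \ref{prop:spreading} I would prove is the following. If $\mu_0,\mu_1\in\mathcal{P}^{ac}(X,m)$ have bounded support and densities bounded above by $M:=\max\{||\rho_0||_{L^{\infty}(X,m)},||\rho_1||_{L^{\infty}(X,m)}\}$, then there exists $\pi\in\GeoOpt(\mu_0,\mu_1)$ along which the midpoint $\nu=(e_{1/2})_\#\pi$ is absolutely continuous and satisfies $m(\{\rho_\nu>0\})\ge 1/M$. Indeed, choose $\pi$ using the weak displacement convexity of $\sF$. Since $F(0)=0$ and $F(x)/x$ is increasing,
\[
\sF(\mu_i) = \int \frac{F(\rho_i)}{\rho_i}\,\rho_i\,dm \le \frac{F(M)}{M}, \qquad i\in\{0,1\},
\]
and hence $\sF(\nu)\le F(M)/M$ by weak displacement convexity. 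Because $F'(\infty)=\infty$, finiteness of $\sF(\nu)$ forces the singular part of $\nu$ to vanish. Applying Jensen's inequality to the convex function $F$ on $E:=\{\rho_\nu>0\}$ against the probability measure $m|_E/m(E)$ yields
\[
m(E)\,F\!\left(\frac{1}{m(E)}\right) \le \int_E F(\rho_\nu)\,dm = \sF(\nu) \le \frac{F(M)}{M},
\]
and strict monotonicity of $F(x)/x$ then forces $1/m(E)\le M$.

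With this spreading estimate I would run the proofs of Proposition \ref{prop:middensity} and Corollary \ref{cor:middensity} essentially unchanged, with the new estimate playing the role of Proposition \ref{prop:spreading} and with $C(N,K,D)$ replaced by $1$. Existence of a minimizer of $\mathcal{F}_C$ in $\mathcal{M}(\mu_0,\mu_1)$ is supplied by Proposition \ref{prop:existence}, since $X$ is boundedly compact. The mass-redistribution argument then shows $\min_{\mathcal{M}(\mu_0,\mu_1)}\mathcal{F}_C=0$ for every $C>M$, and the continuity step of Corollary \ref{cor:middensity} pushes this to $C=M$. The dyadic induction in the proof of Theorem \ref{thm:goodgeodesic} then produces the desired geodesic: since each chosen midpoint has density bounded by the maximum of the endpoint densities with no amplifying factor, the bound $||\rho_t||_{L^{\infty}(X,m)}\le M$ propagates to all dyadic parameters, and Lemma \ref{lma:lsc} extends it to all $t\in[0,1]$. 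The Poincar\'e inequality then follows immediately from Theorem \ref{thm:general} with constant function $C(D)\equiv 1$.

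The main technical point is the analog of Proposition \ref{prop:spreading}, but the hypotheses $F(0)=0$, $F(x)/x$ strictly increasing, and $F'(\infty)=\infty$ are tailored precisely so that Jensen's inequality delivers a spreading bound with no multiplicative constant; once this is in place, the dyadic iteration does not accumulate any error and no further technical ingredient beyond the machinery already developed in Section \ref{sec:construction} is required.
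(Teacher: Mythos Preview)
Your proposal is correct and follows essentially the same route as the paper: establish the spreading estimate $m(\{\rho_{1/2}>0\})\ge 1/M$ via the chain $\sF(\nu)\le F(M)/M$ (from $F(x)/x$ increasing) and $\sF(\nu)\ge m(E)F(1/m(E))$ (from Jensen), then invoke strict monotonicity of $F(x)/x$, and finally appeal to the machinery of Section~\ref{sec:construction} with constant $1$. The paper's own proof is in fact terser than yours---after deriving $m(E)\ge 1/M$ it simply says ``thus the considerations of Section~\ref{sec:construction} work also in this situation''---so your explicit tracking of Propositions~\ref{prop:existence} and~\ref{prop:middensity}, Corollary~\ref{cor:middensity}, and the dyadic completion is a faithful unpacking of that sentence.
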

 \begin{proof}
  The local Poincar\'e type inequality follows from the density bound via Theorem \ref{thm:general}.
  Therefore we only have to prove the density bound.
  Take $\mu_0, \mu_1 \in \mathcal{P}^{ac}(X,m)$ with bounded support and with densities $\rho_0$ and $\rho_1$
  bounded from above and let $\pi \in \GeoOpt(\mu_0,\mu_1)$ be a measure along which we have displacement convexity.
  Write
  \[
   M = \max\{||\rho_0||_{L^{\infty}(X,m)},||\rho_1||_{L^{\infty}(X,m)}\}
  \]
  and
  \[
   E = \{x \in X ~:~ \rho_{\frac12}(x) > 0\}.
  \]
  Now from the weak displacement convexity we get
  \begin{align*}
   \sF((e_\frac12)_\#\pi) & \le \frac12 \sF(\mu_0) + \frac12 \sF(\mu_1) = \frac12 \int_X F(\rho_0)dm + \frac12 \int_X F(\rho_1)dm\\
                          & = \frac12 \int_X \frac{F(\rho_0)}{\rho_0}\rho_0dm + \frac12 \int_X \frac{F(\rho_1)}{\rho_1}\rho_1dm \\
                          & \le \frac12 \int_X \frac{F(M)}{M}\rho_0dm + \frac12 \int_X \frac{F(M)}{M}\rho_1dm = \frac{F(M)}{M}.
  \end{align*}
  In particular $(e_\frac12)_\#\pi$ has no singular part and then by Jensen's inequality
  \begin{align*}
   \sF((e_\frac12)_\#\pi) & = \int_E F(\rho_\frac12)dm = m(E) \dashint_E F(\rho_\frac12)dm \\
                          & \ge  m(E) F\left(\dashint_E \rho_\frac12dm \right) = m(E)F(m(E)^{-1}).
  \end{align*}
  Combining these two estimates with the fact that $F(x)/x$ is strictly increasing yields
  \[
   m(E) \ge \frac1M.
  \]
  Thus the considerations of Section \ref{sec:construction} work also in this situation and the density bound follows.
 \end{proof}

\section{$MCP(K,N)$ property on $CD(K,N)$ spaces}\label{sec:mcp}

In this section we construct another set of good geodesics in $CD(K,N)$ spaces (where $N < \infty$) with sharp density bounds
using the minimizing procedure of Section \ref{sec:construction}. These geodesics are constructed between a point mass
and a uniformly distributed measure. Such geodesics are the ones that are used
in the definition of the measure contraction property $MCP(K,N)$. So, once we have found these geodesics we have 
proved the $MCP(K,N)$ property.

Construction of the needed geodesics relies on the same techniques that were used in Section \ref{sec:construction}. Instead of minimizing 
$\mathcal{F}_C$ among midpoints between the measures $\mu_0$ and $\mu_1$, we will take a $\lambda \in(0,1)$ and minimize $\mathcal{F}_C$
in $\mathcal{I}_\lambda(\mu_0,\mu_1)$. This minimization together
with the lower semicontinuity of $\mathcal{F}_C$ gives us the needed bounds already for a sequence of intermediate measures, as will
be seen in Lemma \ref{lma:mcp_sharpmid}.

\begin{remark}\label{rmk:annulardisjoint}
 In verifying the measure contraction property we will consider geodesics between measures $\mu_0 = \delta_x$ and
 $\mu_1 = \frac1{m(A)}m|_A$. Because the restrictions of the measure $\mu_1$ to annular regions
 \[
  A_k = B(x,r^k) \setminus B(x,r^{k-1}), \qquad{k \in \Z}
 \]
 have pairwise disjoint supports even when we move them along any geodesic towards $\mu_0$, we can define the intermediate measures
 and the geodesic separately for each such annular region. This for example allows as to make the assumption that $A$ is bounded.
\end{remark}

 In the following lemma we will use the notation $A_k$ of previous remark and also abbreviate a dilated annulus by
 \[
  sA_k = B(x,tr^k) \setminus B(x,sr^{k-1})
 \]
 for all $s \in [0,1]$.

\begin{lemma}\label{lma:mcp_sharpmid}
 Let $x \in X$ and $A \subset X$ with $0 < m(A) < \infty$. Suppose that we have $\pi \in \GeoOpt(\mu_0,\mu_1)$ with
 $\mu_0 = \delta_x$ and $\mu_1 = \frac1{m(A)}m|_A$ and $t \in (0,1]$ for which we have
 \begin{equation}\label{eq:MCPdef_revisit}
  dm \ge (e_{t})_\#\left(t^N\beta_{t}(\gamma_0,\gamma_1)m(A)d\pi(\gamma)\right).
 \end{equation}
 Then for any $\lambda \in (0,1)$ there exists $\tilde\pi \in \GeoOpt(\mu_0,\mu_1)$ so that 
 \[
  (e_{s})_\#\pi = (e_{s})_\#{\tilde\pi}
 \]
 for all $s \in [t,1]$ and \eqref{eq:MCPdef_revisit} holds also with $t$ replaced by $\lambda t$ and 
 $\pi$ replaced by $\tilde\pi$.
\end{lemma}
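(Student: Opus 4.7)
The plan is to leave the restriction of $\pi$ to $[t,1]$ untouched and only redesign its restriction to $[0,t]$, rescaled to a transport from $\delta_x$ to $\sigma_t := (e_t)_\#\pi$. By disintegrating $\pi$ with respect to $\gamma_t = y$, any $\tilde\pi$ satisfying $(e_s)_\#\tilde\pi = (e_s)_\#\pi$ for $s \in [t,1]$ is obtained by choosing, for $\sigma_t$-almost every $y$, a measure on $\GeoOpt(\delta_x,\delta_y)$ (namely a measure supported on geodesics from $x$ to $y$) and concatenating its samples, rescaled to $[0,t]$, with the original $[t,1]$-fibres of $\pi$ above $y$. The task therefore reduces to selecting a measure $\eta$ on $\Geo(X)$ with $(e_0)_\#\eta = \delta_x$ and $(e_1)_\#\eta = \sigma_t$, so that the time-$\lambda$ marginal $(e_\lambda)_\#\eta$, which will become the time-$\lambda t$ marginal of $\tilde\pi$, satisfies the density bound encoded in \eqref{eq:MCPdef_revisit}.

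To construct $\eta$ I would adapt the minimization procedure of Section \ref{sec:construction} to the degenerate case in which one endpoint is a point mass. After reducing to bounded $A$ via Remark \ref{rmk:annulardisjoint}, minimize a weighted variant of $\mathcal{F}_C$ over $\mathcal{I}_\lambda(\delta_x,\sigma_t)$, the weight encoding the $\beta_{\lambda t}$-factor that appears in \eqref{eq:MCPdef_revisit}. Existence of a minimizer is provided by Lemma \ref{lma:cmpt} together with Lemma \ref{lma:lsc}, and the minimum must vanish at the critical threshold by the spreading argument of Proposition \ref{prop:middensity}. Its input, Proposition \ref{prop:spreading}, continues to apply with a point mass on one side because the $1/\rho_0$ term in the $CD(K,N)$ inequality \eqref{eq:CD-def} is vacuous on the singular support of $\delta_x$; in the present setting this yields an inequality of precisely the same shape as \eqref{eq:bigsupport}, now with the sharp dimensional factor $\lambda^N$ in place of $(1/2)^N$ and with the relevant $\beta_\lambda(x,\cdot)$ weight.

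Given the good intermediate $\nu \in \mathcal{I}_\lambda(\delta_x,\sigma_t)$, the gluing step is routine: the Wasserstein geodesic determined by $\eta$ and the original $[t,1]$-piece of $\pi$ meet at $\sigma_t$ with lengths that add up to $W_2(\mu_0,\mu_1)$, so their concatenation is a Wasserstein geodesic and the resulting $\tilde\pi$ lies in $\GeoOpt(\mu_0,\mu_1)$. By construction $(e_s)_\#\tilde\pi = (e_s)_\#\pi$ for every $s \in [t,1]$, while $(e_{\lambda t})_\#\tilde\pi = (e_\lambda)_\#\eta$ inherits the density bound produced by the minimization, which is exactly what is required for \eqref{eq:MCPdef_revisit} to hold at time $\lambda t$ for $\tilde\pi$.

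The main obstacle is calibrating the weight in the minimized functional, and the associated threshold $C$, so that the density bound produced by the CD-spreading matches exactly the pointwise weight $(\lambda t)^N \beta_{\lambda t}(\gamma_0,\gamma_1) m(A)$ demanded by \eqref{eq:MCPdef_revisit}. Unlike in Section \ref{sec:construction}, where the target was a plain $L^\infty$ bound and the constant was permitted to blow up through the geodesic iteration, here the target is a sharp pointwise inequality carrying an endpoint-dependent factor $\beta_{\lambda t}(x,\gamma_1)$. Tracking how this factor factors through the distortion coefficient $\beta_\lambda$ arising from the spreading step, and combining it with the rescaling factor $\lambda^N$ coming from the time-reparametrization of $[0,t]$, is where the genuine dimensional sharpness of the measure contraction property is eventually recovered.
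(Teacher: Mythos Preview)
Your overall plan---leave the $[t,1]$-part of $\pi$ untouched, replace the $[0,t]$-part by choosing a good $\lambda$-intermediate between $\delta_x$ and $\sigma_t=(e_t)_\#\pi$, then glue---is exactly what the paper does, and the gluing step is indeed routine. You also correctly identify that the $CD(K,N)$ inequality \eqref{eq:CD-def} with one endpoint a Dirac mass still gives a useful one-sided entropy bound, and you correctly isolate the real difficulty: the target inequality \eqref{eq:MCPdef_revisit} involves the endpoint-dependent weight $\beta_{\lambda t}(\gamma_0,\gamma_1)$, so a plain $L^\infty$ bound is not enough.

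Where the proposal remains genuinely incomplete is in your treatment of this weight. You suggest minimizing a ``weighted variant of $\mathcal{F}_C$'' and then invoking the spreading argument of Proposition~\ref{prop:middensity}. But that argument is tailored to a \emph{constant} threshold $C$: the key step compares $m(\{\rho_\nu>C\})$ before and after spreading, and this comparison does not transfer directly to a spatially varying threshold $C(y)$. You acknowledge this as ``the main obstacle'' but do not indicate how to overcome it. The paper's device is different and more concrete: rather than introducing a weighted functional, it uses Remark~\ref{rmk:annulardisjoint} in a much stronger way than you do. It decomposes $A$ (and hence $\sigma_t$) into thin annuli $A_k=B(x,r^k)\setminus B(x,r^{k-1})$, on each of which $d(x,\gamma_1)$ varies only by the ratio $r$, so the weight $\beta_{\lambda t}(\gamma_0,\gamma_1)$ is nearly constant. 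On each annulus one can then run the \emph{unweighted} minimization of $\mathcal{F}_C$ at the appropriate threshold, exploiting the exact multiplicative identity $\beta_\lambda(t\ell)\,\beta_t(\ell)=\beta_{\lambda t}(\ell)$ to pass from the density bound at time $t$ to the sharp bound at time $\lambda t$. Summing the pieces gives the correct inequality up to a factor depending on $r$, and one concludes by sending $r\searrow 1$ using compactness (Lemma~\ref{lma:cmpt}) and lower semicontinuity (Lemma~\ref{lma:lsc}). This annulus-then-limit trick is the missing ingredient in your sketch; you invoke Remark~\ref{rmk:annulardisjoint} only to reduce to bounded $A$, whereas in the paper it is the mechanism that reduces the weighted problem to a family of unweighted ones.
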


\begin{proof}
 Take $r>1$. With Remark \ref{rmk:annulardisjoint} in mind we can define the intermediate measure separately
 for different annuli. Take $k \in \Z$ so that $m(A_k)> 0$ where $A_k$ is an annulus as in Remark \ref{rmk:annulardisjoint}.
 By \eqref{eq:MCPdef_revisit} we have for the density $\rho$ of $(e_{t})_\#\pi$ with respect to $m$ the estimate
 \[
  \rho \le \frac1{t^{N}\min\{\beta_{t}(r^{k}),\beta_{t}(r^{k-1})\}m(A)}\qquad \text{for all }y \in tA_k.
 \]
 
 Now any
 \[
  \pi_k \in \GeoOpt\left(\mu_0, \frac{m(A)}{m(A_k)}((e_{t})_\#\pi)|_{tA_k}\right)  
 \]
 is concentrated on geodesics with length between $tr^{k-1}$ and $tr^{k}$.
 
 Therefore by \eqref{eq:CD-def} there exists a measure $\pi_k$ with
 \begin{align*}
  \sE_N\left((e_{\lambda})_\#\pi_k\right) & \le - \lambda \left(t^{N}\min\{\beta_{\lambda}(tr^{k}),\beta_{\lambda}(tr^{k-1})\}\min\{\beta_{t}(r^{k}),\beta_{t}(r^{k-1})\}m(A_k)\right)^\frac1N\\
   & = - \lambda t\left(\min\{\beta_{\lambda t}(r^{k}),\beta_{\lambda t}(tr^{k-1})\}m(A_k)\right)^\frac1N.
 \end{align*}
 
 Then with the help of Jensen's inequality as in the proof of Proposition \ref{prop:spreading} and
 with a similar proof as for Proposition \ref{prop:middensity} we get
 a good intermediate measure 
 \[
  \nu_k \in \mathcal{I}_{1-\lambda}\left(\mu_0,\frac{m(A)}{m(A_k)}((e_{t})_\#\pi)|_{tA_k}\right)
 \]
 which has the density $\rho_k$ with respect to $m$ with the bound
 \begin{align*}
  \rho_k & \le \frac{1}{(\lambda t)^N \min\{\beta_{\lambda t}(r^{k}),\beta_{\lambda t}(r^{k-1})\}m(A_k)}.
 \end{align*}

 Now the sum
 \[
  \nu = \sum_{k \in \Z} m(A_k)\nu_k
 \]
 has the correct density bound locally up to a constant which tends to one as $r \searrow 1$. Hence by Lemma \ref{lma:cmpt}
 we find a sequence converging to a measure where we have the correct density bound by Lemma \ref{lma:lsc}. This measure
 induces the desired $\tilde\pi$.
\end{proof}
 
The proof now follows using the lower semicontinuity of $\mathcal{F}_C$ and the compactness of the set of geodesics
between $\mu_0$ and $\mu_1$.

\begin{proof}[Proof of Theorem \ref{thm:mcp}]
 Let $x \in X$ and $A \subset X$ with $0 < m(A) < \infty$. Because of the Remark \ref{rmk:annulardisjoint}
 we may assume that $A$ is bounded. Write $\mu_0 = \delta_x$ and $\mu_1 = \frac1{m(A)}m|_A$.
 By Lemma \ref{lma:mcp_sharpmid} we get for every $n \in \N$ a measure $\pi_n \in \GeoOpt(\mu_0,\mu_1)$ with
 \begin{equation}\label{eq:MCPdef_revisit2}
   dm \ge (e_{t})_\#\left(t^N\beta_{t}(\gamma_0,\gamma_1)m(A)d\pi_n(\gamma)\right).
 \end{equation}
 for all $t = \frac{k}{2^n}$, with $k = 1, 2, \dots, 2^n$.

 By Lemma \ref{lma:cmpt_geod} the sequence $(\pi_n)_{n=1}^\infty$ has a converging subsequence in the $\mathcal{W}_2$-distance.
 From Lemma \ref{lma:lsc} we see that the limit $\pi \in \GeoOpt(\mu_0,\mu_1)$ of this subsequence then satisfies 
 \eqref{eq:MCPdef_revisit2} for all $t \in [0,1]$.
\end{proof}

\end{document}